\newtheorem{theo+}              {Theorem}           [section]
\newtheorem{prop+}  [theo+]     {Proposition}
\newtheorem{coro+}  [theo+]     {Corollary}
\newtheorem{lemm+}  [theo+]     {Lemma}
\newtheorem{exam+}  [theo+]     {Example}
\newtheorem{rema+}  [theo+]     {Remark}
\newtheorem{defi+}  [theo+]     {Definition}
\def \r{\mbox{${\mathbb R}$}}
\newenvironment{theorem}{\begin{theo+}}{\end{theo+}}
\newenvironment{proposition}{\begin{prop+}}{\end{prop+}}
\newenvironment{corollary}{\begin{coro+}}{\end{coro+}}
\newenvironment{lemma}{\begin{lemm+}}{\end{lemm+}}
\newenvironment{definition}{\begin{defi+}}{\end{defi+}}
\theoremstyle{plain} \theoremstyle{remark}
\newtheorem{remark}{Remark}
\newtheorem{example}{Example}
\def \r{\mbox{${\mathbb R}$}}
\def\M{$M_{\sigma}^m(o)$\;}
\title{Bi-Laplacian and bi-eigenvalues on some model spaces}
\author{Ye-Lin Ou }
\address{Department of Mathematics,\newline\indent
Texas A $\&$ M University-Commerce,\newline\indent Commerce, TX
75429, U S A.\newline\indent E-mail:yelin.ou@tamuc.edu}
\begin{document}

\title[Biharmonic functions and bi-eigenfunctions on some model spaces]{Biharmonic functions and bi-eigenfunctions on some model spaces}

\subjclass{58E20, 53C12} \keywords{Bi-Laplacian, bi-eigenvalues, biharmonic functions, bi-eigenfunctions, spherical harmonics, model space. }
\date{10/6/2024}
\maketitle

\maketitle
\section*{Abstract}
\begin{quote}
{\footnotesize 
} In this paper, we first give a convenient formula for bi-Laplacian on  a sphere and the complete description of its eigenvalues, buckling eigenvalues, and their corresponding eigenfunctions. We then show that the radial (or rotationally symmetric) solutions for biharmonic equation on the model space $(  \r^+\times S^{m-1},  dr^2 + \sigma^2(r)\, g^{S^{m-1}})$ can be given  by an integral formula. We also prove that the model space always admits proper biharmonic functions as the products of any eigenfunctions of the factor sphere with certain radial functions. Many explicit examples of proper biharmonic functions on space forms are given. Finally, we give a complete classification of proper biharmonic functions with positive Laplacian on the punctured Euclidean space.
\end{quote}

\section{Introduction}

Our sign convention for the Laplace operator acting on functions on a Riemannian manifold $(M, g)$ is $\Delta =div (\nabla f)$, and the bi-Laplace operator is denoted by $\Delta^2 f=\Delta (\Delta f)$.\\

Biharmonic  functions are solutions of bi-Laplace equations $\Delta^2 u=0$.\\

      Biharmonic functions had been found to have important applications in mathematics and physics ever since the work of Airy (1862) and Maxwell on the solution to the problem of the stress distribution on a thin plate (see e.g, \cite{TG}). Recall that to solve this stress distribution problem one is lead to the equilibrium  equation with some boundary and compatibility conditions, which can be put as 
\begin{align}
\begin{cases}
\frac{\partial \sigma_{x}}{\partial x}+\frac{\partial
\tau_{xy}}{\partial y}=0,\\
 \frac{\partial
\sigma_{y}}{\partial y}+\frac{\partial \tau_{xy}}{\partial x}\, \rho g=0,\\
\Delta (\sigma_{x}+\sigma_{y})=0,
\end{cases}
\end{align}
where $\sigma_{x}, \sigma_{y}, \tau_{xy}$ are components of stress at a point of the plate.\\

G. B. Airy (1862) solved the equation by introducing the so-called stress function $\phi(x, y)$ which satisfies
\begin{equation}
 \sigma_{x}=\frac{\partial^2 \phi}{\partial y^2}-\rho g y,\:\:
\sigma_{y}=\frac{\partial^2 \phi}{\partial x^2}-\rho g y,\;\;
\tau_{xy}=-\frac{\partial^2 \phi}{\partial x\partial y}.
\end{equation}

It was Maxwell  who discovered that Airy's stress functions $\phi(x,y)$ are  biharmonic functions.\\

A series of important applications of biharmonic functions  in classifying Riemannian manifolds were also made  by  Sario, Nakai,  Wang, and  Chung during the late  1970s. Their classification theory, which generalizes the ideas from uniformization theorem and classification of Riemann surfaces, helps  to classify Riemannian manifolds by the existence or nonexistence of  biharmonic functions with  various boundedness conditions (such as positive, bounded, Dirichlet finite, or both bounded and Dirichlet finite, etc).

Related to bi-Laplacian there is a lot of recent work, see e.g., \cite{CY06, WX1, CY11, CC, Xi, WX2, ZZ}) and the references therein, which study the local or the global  bi-eigenfunctions and buckling functions on a Riemannian manifold.

\begin{align}\notag
  The\;Clamped\; plate\; problem:&\;\begin{cases}
\Delta^2 f=\mu f\;\;on \; M\\
f|_{\partial M}=\frac{\partial f}{\partial n}|_{\partial M}=0,\\
\end{cases}\\\notag
  The\;buckling\; eigenvalue\; problem:&\; \begin{cases}
\Delta^2 f=-\nu \Delta f\;\;on \; M\\
f|_{\partial M}=\frac{\partial f}{\partial n}|_{\partial M}=0.
\end{cases}
\end{align}

Finally, we note that biharmonic functions are special cases of biharmonic maps between Riemannian manifolds, the geometric study of the latter has lead to many fruitful results and interesting links among differential geometry, differential equations, geometric analysis, and calculus of variations. See a recent book \cite{OC} and the vast references therein some recent progress on the study of biharmonic maps.

In general, computing bi-Laplacian and solving the 4th order bi-Laplace  or bi-eigenvalue problem is not an easy job. For this reason, we have not seen many examples of proper biharmonic functions on a sphere or other model spaces.

In this paper, we first give a convenient formula for bi-Laplacian on  a sphere and the complete description of its eigenvalues, buckling eigenvalues, and their corresponding eigenfunctions. We then show that the radial (or rotationally symmetric) solutions for biharmonic equation on the model space $(  \r^+\times S^{m-1},  dr^2 + \sigma^2(r)\, g^{S^{m-1}})$ can be given  by an integral formula. We also prove that the model space always admits proper biharmonic functions as the products of any eigenfunctions of the factor sphere with certain radial functions. Many explicit examples of proper biharmonic functions on space forms are given. Finally, we give a complete classification of proper biharmonic functions with positive Laplacian on the punctured Euclidean space.

\section{Bi-Laplacian, Bi-eigenfunctions and buckling eigenfunctions on a sphere}

In this section, we  derive a formula for the bi-Laplacian on functions on a sphere using the Euclidean bi-Laplacian and give a complete description of all bi-eigenvalues, buckling eigenvalues and the corresponding bi-eigenfunctions and buckling eigenfunctions on  the sphere $S^m$.

{\bf 1.1 A formula for bi-Laplacian on functions on the sphere $S^m$}\\

Let ${\bf i}: S^{m}\to \r^{m+1}$ denote the standard isometric embedding of the unit sphere into Euclidean space. It is well known (see e.g., \cite{OC}, Lemma 9.6) that the Euclidean Laplacian and the Laplacian on a sphere are related by
\begin{eqnarray}\label{LaS}
\Delta_{S^m}(F\circ{\bf i}) =\left(\Delta_{\r^{m+1}} F-\frac{\partial^2F}{\partial r^2}-m\frac{\partial F}{\partial r}\right)\circ{\bf i}.
\end{eqnarray}
where $\frac{\partial}{\partial r}$ denotes the directional derivative in the radial direction.

Using the fact that the radial projection $\pi: \r^{m+1}\to S^{m}, \pi(x)=\frac{x}{|x|}$ is a harmonic morphism (a horizontally weakly conformal  harmonic map) we derive the following  convenient formula to compute the bi-Laplacian on a sphere using the data and the operators on the ambient Euclidean space.

\begin{theorem}
For any function  $f:S^m\supseteq U\to \r$, we have
\begin{align}\label{RS1}
\Delta^2_{S^m} f= \left(\Delta^2_{\r^{m+1}}f\left( \frac{x}{|x|}\right)\right)_{S^m}+2(m-3)\Delta_{S^m} f,\; or\\\label{RS2}
\Delta^2_{S^m} f= \left(\Delta^2_{\r^{m+1}}f\left( \frac{x}{|x|}\right)+2(m-3)\Delta_{\r^{m+1}} f\left( \frac{x}{|x|}\right)\right)_{S^m}.
\end{align}
\end{theorem}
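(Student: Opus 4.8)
The plan is to compute $\Delta^2_{S^m}f$ by lifting $f$ to its \emph{radially invariant} extension on the punctured Euclidean space and then using, twice, the composition law for the Laplacian under the radial projection $\pi$, which — as recalled above — is a harmonic morphism.

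\noindent\textbf{Step 1: the radial extension and the first Laplacian.} Given $f\colon U\to\r$ with $U\subseteq S^m$ open, I would set $F:=f\circ\pi$ on the open cone $C_U:=\{tx : t>0,\ x\in U\}\subseteq\r^{m+1}\setminus\{0\}$, so that $F(x)=f(x/|x|)$ and $F\circ{\bf i}=f$ on $U$. Since $\pi$ is smooth on $\r^{m+1}\setminus\{0\}$, $F$ is smooth on $C_U$ and is constant along each ray, hence $\partial F/\partial r\equiv\partial^2F/\partial r^2\equiv 0$. Writing the Euclidean Laplacian in polar coordinates,
\[
\Delta_{\r^{m+1}}=\frac{\partial^2}{\partial r^2}+\frac{m}{r}\frac{\partial}{\partial r}+\frac{1}{r^2}\Delta_{S^m}
\]
(equivalently, using that $\pi$ is a harmonic morphism with dilation $\lambda=1/r$, so $\Delta_{\r^{m+1}}(u\circ\pi)=\lambda^2(\Delta_{S^m}u)\circ\pi$), one gets
\[
\Delta_{\r^{m+1}}F=\frac{1}{r^2}\,(\Delta_{S^m}f)\circ\pi .
\]
Restricting to $S^m$ (where $r=1$) recovers $\Delta_{S^m}f=(\Delta_{\r^{m+1}}F)_{S^m}$, consistent with \eqref{LaS}.

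\noindent\textbf{Step 2: the second Laplacian.} Next I would apply $\Delta_{\r^{m+1}}$ to $r^{-2}G$, where $G:=(\Delta_{S^m}f)\circ\pi$ is again radially invariant, via the Leibniz rule $\Delta(uv)=u\,\Delta v+2\langle\nabla u,\nabla v\rangle+v\,\Delta u$ with $u=r^{-2}$, $v=G$. This uses three facts: (i) $\Delta_{\r^{m+1}}r^{-2}=-2(m-3)\,r^{-4}$, from the standard identity $\Delta_{\r^{m+1}}r^{k}=k(k+m-1)r^{k-2}$; (ii) $\langle\nabla r^{-2},\nabla G\rangle=0$, since $\nabla r^{-2}=-2r^{-4}x$ is radial while $\nabla G\perp x$ by radial invariance of $G$; and (iii) $\Delta_{\r^{m+1}}G=\tfrac{1}{r^{2}}(\Delta^2_{S^m}f)\circ\pi$, by Step 1 applied to $\Delta_{S^m}f$ in place of $f$. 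Collecting these,
\[
\Delta^2_{\r^{m+1}}F=\frac{1}{r^{4}}\Bigl[(\Delta^2_{S^m}f)\circ\pi-2(m-3)\,(\Delta_{S^m}f)\circ\pi\Bigr].
\]

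\noindent\textbf{Step 3: restrict to the sphere.} Evaluating the last identity on $S^m$ (where $r=1$) yields \eqref{RS1}; substituting $2(m-3)\Delta_{S^m}f=2(m-3)\bigl(\Delta_{\r^{m+1}}F\bigr)_{S^m}$ from Step 1 turns \eqref{RS1} into \eqref{RS2}. The computation is short, and I do not expect a real obstacle: the only point demanding care is the bookkeeping behind the coefficient $2(m-3)$, which is entirely driven by $\Delta_{\r^{m+1}}r^{-2}=-2(m-3)r^{-4}$ in dimension $m+1$ together with the (easy but essential) vanishing of the mixed gradient term; a minor secondary point is to note that everything happens on $\r^{m+1}\setminus\{0\}$, where $F$ is smooth, so the apparent singularity at the origin is harmless and \eqref{LaS} applies verbatim to $F=f\circ\pi$ on the cone $C_U$.
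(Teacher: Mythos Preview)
Your proof is correct and follows essentially the same route as the paper: both arguments apply the harmonic-morphism identity $\Delta_{\r^{m+1}}(f\circ\pi)=\lambda^2(\Delta_{S^m}f)\circ\pi$ once, then hit the result with $\Delta_{\r^{m+1}}$ via the Leibniz rule, and both identify the key ingredients as $\Delta_{\r^{m+1}}r^{-2}=-2(m-3)r^{-4}$ together with the vanishing of the mixed gradient term. The only cosmetic difference is that the paper justifies $\langle\nabla\lambda^2,\nabla((\Delta_{S^m}f)\circ\pi)\rangle=0$ by invoking the horizontally homothetic condition $\langle\nabla\lambda^2,\nabla\pi^\alpha\rangle=0$, whereas you argue directly that $\nabla r^{-2}$ is radial while $\nabla G$ is tangential; these are equivalent.
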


\begin{proof}
It is well known (see e.g. \cite{BW}) that the radial projection $\pi:\r^{m+1}\to S^m$ with $\pi(x)=\frac{x}{|x|}$ is a horizontally homothetic harmonic morphism with dilation $\lambda=\frac{1}{|x|}$. It follows that for any function $f:S^m\supseteq U\to \r$, we have
\begin{equation}\label{RP}
\Delta_{\r^{m+1}}(f\circ \pi)=\lambda^2(\Delta_{S^m} f)\circ \pi, \; \forall\; x\in \pi^{-1}(U)\subseteq \r^{m+1}\setminus\{0\},\;i.e.,
\end{equation}
\begin{align}\notag
\Delta_{\r^{m+1}}f(\frac{x}{|x|})=& \frac{1}{|x|^2}(\Delta_{S^m} f)\,(\frac{x}{|x|}), \; \forall\; x\in  \pi^{-1}(U)\subseteq \r^{m+1}\setminus\{0\},\; {\rm or}\\\label{D28}
(\Delta_{S^m} f)\circ \pi  =&  |x|^2\,\Delta_{\r^{m+1}}\left(f(\frac{x}{|x|})\right),\;  \forall\; x\in  \pi^{-1}(U)\subseteq \r^{m+1}\setminus\{0\}.
\end{align}

Evaluating the above equation on $S^m$ we have
\begin{align}\label{DSG}
\Delta_{S^m} f =\left(\Delta_{\r^{m+1}}f(\frac{x}{|x|})\right)_{S^m}.
\end{align}

Applying $\Delta_{\r^{m+1}}$ to both sides of (\ref{RP}), using the product rule for Laplace operator,  and a straightforward computation yields
\begin{align}\notag
\Delta^2_{\r^{m+1}}(f\circ \pi) &=\Delta_{\r^{m+1}}(\Delta_{\r^{m+1}}(f\circ \pi))=\Delta_{\r^{m+1}}(\lambda^2(\Delta_{S^m}f)\circ \pi)\\\notag
&=\lambda^4(\Delta^2_{S^m} f)\circ\pi +2\langle \nabla\lambda^2, \nabla((\Delta_{S^m}f)\circ \pi) \rangle+ (\Delta_{\r^{m+1}}\lambda^2)(\Delta_{S^m} f)\circ\pi\\\label{2D}
&=\lambda^4(\Delta^2_{S^m} f)\circ\pi+2(\Delta_{S^m}f)_{\alpha}\circ\pi\; \langle \nabla\lambda^2, \nabla\pi^{\alpha}\rangle+(\Delta_{\r^{m+1}}\lambda^2)(\Delta_{S^m} f)\circ\pi.
\end{align}
Using the fact that $\pi$ is horizontally homothetic and hence $\langle \nabla\lambda^2, \nabla\pi^{\alpha}\rangle=0, \forall\, \alpha=1, 2, \cdots, m$, together with  
$\lambda^2=\frac{1}{|x|^2},\;\;\;\Delta_{\r^{m+1}}\lambda^2=-\frac{2(m-3)}{|x|^4}$, we have

\begin{align}\notag
\Delta^2_{\r^{m+1}}(f\circ \pi)=\frac{1}{|x|^4}(\Delta^2_{S^m} f)\circ\pi -\frac{2(m-3)}{|x|^4}(\Delta_{S^m} f)\circ\pi, or\\\label{KE}
|x|^4 \Delta^2_{\r^{m+1}}f\left( \frac{x}{|x|}\right)=(\Delta^2_{S^m} f)\left( \frac{x}{|x|}\right)-2(m-3)(\Delta_{S^m} f)\left( \frac{x}{|x|}\right).
\end{align}

Evaluate both sides of (\ref{KE}) on $x\in S^m$ (i.e., $|x|=1$) gives (\ref{RS1}). Using (\ref{RS1}) and (\ref{DSG}) we have

\begin{align}\notag
\left(\Delta^2_{\r^{m+1}}f\left( \frac{x}{|x|}\right)\right)_{S^m}=\Delta^2_{S^m} f-2(m-3)\left(\Delta_{\r^{m+1}}f\left( \frac{x}{|x|}\right)\right)_{S^m},
\end{align}
from which we obtain (\ref{RS2}).

\end{proof}

As immediate consequences, we have
\begin{corollary}
(i) For any function $f:S^3 \supseteq U\to \r$, we have
\begin{align}\label{q3}
\Delta^2_{S^3} f &= \left(\Delta^2_{\r^{4}}f\left( \frac{x}{|x|}\right)\right)_{S^3}.
\end{align}
(ii) A function $f:S^2 \supseteq U\to \r$ is a buckling eigenfunction of eigenvalue $\mu=2$, i.e., $\Delta^2_{S^2} f= -2 \Delta_{S^2} f$ if and only if  $\left(\Delta^2_{\r^{3}}f\left( \frac{x}{|x|}\right)\right)_{S^2}=0$.

\end{corollary}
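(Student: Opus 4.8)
The plan is simply to specialize the two identities (\ref{RS1}) and (\ref{RS2}) of the Theorem to the dimensions in which the universal coefficient $2(m-3)$ degenerates; no new ingredient is needed.

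For part (i) I would set $m=3$ in (\ref{RS1}). Since $m-3=0$, the correction term $2(m-3)\Delta_{S^3}f$ disappears, and (\ref{RS1}) collapses immediately to $\Delta^2_{S^3} f = \left(\Delta^2_{\r^{4}}f\left( \frac{x}{|x|}\right)\right)_{S^3}$, which is exactly (\ref{q3}).

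For part (ii) I would set $m=2$ in (\ref{RS1}). Then $2(m-3)=-2$, so (\ref{RS1}) becomes $\Delta^2_{S^2} f = \left(\Delta^2_{\r^{3}}f\left( \frac{x}{|x|}\right)\right)_{S^2}-2\,\Delta_{S^2} f$, equivalently $\Delta^2_{S^2}f+2\,\Delta_{S^2}f=\left(\Delta^2_{\r^{3}}f\left( \frac{x}{|x|}\right)\right)_{S^2}$. Hence the left-hand side vanishes — which is precisely the assertion that $f$ is a buckling eigenfunction with eigenvalue $\mu=2$ — if and only if the right-hand side vanishes. One can equally read this off (\ref{RS2}) at $m=2$ combined with (\ref{DSG}): there $\left(\Delta^2_{\r^{3}}f\left( \frac{x}{|x|}\right)+2(m-3)\Delta_{\r^{3}}f\left( \frac{x}{|x|}\right)\right)_{S^2}=\left(\Delta^2_{\r^{3}}f\left( \frac{x}{|x|}\right)\right)_{S^2}-2\,\Delta_{S^2}f$, yielding the same equivalence.

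I do not expect any real obstacle here, since the corollary is a direct substitution into the Theorem. The only points requiring a little care are the sign of the coefficient $2(m-3)$ at $m=2$, and, when passing between the two forms (\ref{RS1}) and (\ref{RS2}), the identity $\left(\Delta_{\r^{m+1}}f\left(\frac{x}{|x|}\right)\right)_{S^m}=\Delta_{S^m}f$ recorded in (\ref{DSG}).
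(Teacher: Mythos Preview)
Your proposal is correct and matches the paper's approach: the corollary is presented there as an ``immediate consequence'' of (\ref{RS1})--(\ref{RS2}), obtained exactly by the substitutions $m=3$ and $m=2$ you describe.
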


\begin{example}
The restriction of $f(x)=\ln (1-x_3), \;x\in \r^3$  is a proper biharmonic function on $S^2$. In fact,  in this case, we have $(f\circ \pi)(x)=f(\frac{x}{|x|})=\ln (1-\frac{x_3}{|x|})$. A straightforward computation yields
\begin{align}
\Delta_{\r^{3}}\left(f(\frac{x}{|x|})\right) & =- \frac{1}{|x|^2},\;\;\;\Delta_{\r^{3}}^2\left(f(\frac{x}{|x|})\right)  =-\frac{2}{|x|^4}.
\end{align}

Substituting these into (\ref{DSG}) and (\ref{RS2}) we have, respectively
\begin{align}
\Delta_{S^2} f & =\left(\Delta_{\r^{3}}\left(f(\frac{x}{|x|})\right) \right)_{|x|=1}=1\ne 0,\\
\Delta_{S^2}^2 f & = \left(\Delta^2_{\r^{3}}f\left( \frac{x}{|x|}\right)-2\Delta_{\r^{3}} f\left( \frac{x}{|x|}\right)\right)_{S^2} \equiv 0.
\end{align}
Thus, $f(x)=\ln (1-x_3), \;x\in S^2$ is a proper biharmonic function  on the sphere $S^2\setminus\{N\}$.
\end{example}

\begin{example}
The restriction of  $f(x)=(a x_1+bx_2+cx_3)/\sqrt{1-x_4^2}$ to $S^3\setminus\{N, S\}$ is a family of proper biharmonic function on 3-sphere.\\
 
 In fact, $f(\frac{x}{|x|})=(a x_1+bx_2+cx_3)/\sqrt{\sum_{i=1}^3x_i^2}$, and a straightforward computation gives 
 \begin{align}
  \Delta_{\r^{4}}\left(f(\frac{x}{|x|})\right)&=\Delta_{\r^{4}}\left( \frac{a x_1+bx_2+cx_3}{\sqrt{x_1^2+x_2^2+x_3^2}}\right)=\frac{-2(a x_1+bx_2+cx_3)}{(x_1^2+x_2^2+x_3^2)^{3/2}},\\\notag
   \Delta^2_{\r^{4}}\left(f(\frac{x}{|x|})\right)&=\Delta_{\r^{4}}\left( \frac{-2(a x_1+bx_2+cx_3)}{(x_1^2+x_2^2+x_3^2)^{3/2}}\right)=0.
 \end{align}
 It follows from  (\ref{q3}) that $\Delta^2_{S^3} f = \left(\Delta^2_{\r^{4}}f\left( \frac{x}{|x|}\right)\right)_{S^3}=0$.
 \end{example}

\begin{corollary}\label{Pk}
$f=F|_{S^m}$ is the  restriction of a homogeneous polynomial $F$ of degree $k$ on $\r^{m+1}$, then we have
\begin{align}\label{D2S}
\Delta^2_{S^m} f & =  k (k+2) (k -1+m) (k+m-3)\,f(x) +2(m-3)\Delta_{S^m} f\\\notag
&+\left(\Delta^2_{\r^{m+1}}F(x) -2k (k+m-3)\, \Delta_{\r^{m+1}} F(x)\right)_{S^m}, \;\;or\\\label{D2S2}
\Delta^2_{S^m} f  = &\;  k^2 (k -1+m)^2\,f  \\\notag
 &+\left(\Delta^2_{\r^{m+1}} F(x)  -2[k^2+(k-1)(m-3)] \Delta_{\r^{m+1}} F(x)\right)_{S^m}.
 \end{align}
\end{corollary}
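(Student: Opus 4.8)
The plan is to specialize the general formula from the theorem to the case where $f = F|_{S^m}$ for a homogeneous polynomial $F$ of degree $k$, exploiting the Euler-type identities that govern how $\Delta_{\r^{m+1}}$ and $\Delta^2_{\r^{m+1}}$ interact with the radial rescaling $x \mapsto F(x/|x|)$. The key observation is that $F(x/|x|) = |x|^{-k} F(x)$, so computing $\Delta_{\r^{m+1}}$ and $\Delta^2_{\r^{m+1}}$ of $F(x/|x|)$ reduces to differentiating products of the form $|x|^{-k}F(x)$, where $F$ is harmonic-adjacent but not necessarily harmonic.

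First I would record the basic facts: for a homogeneous polynomial $F$ of degree $k$ on $\r^{m+1}$, the Euler identity gives $\langle x, \nabla F\rangle = kF$, hence $\langle \nabla |x|^{s}, \nabla F\rangle = s|x|^{s-2}\langle x,\nabla F\rangle = sk|x|^{s-2}F$, and $\Delta_{\r^{m+1}}|x|^{s} = s(s+m-1)|x|^{s-2}$. Applying the product rule for the Laplacian to $|x|^{-k}F$ I would get
\begin{align}\notag
\Delta_{\r^{m+1}}\!\left(F\!\left(\tfrac{x}{|x|}\right)\right) &= \Delta_{\r^{m+1}}\!\left(|x|^{-k}F\right) \\\notag
&= |x|^{-k}\Delta_{\r^{m+1}}F + 2\langle \nabla|x|^{-k},\nabla F\rangle + F\,\Delta_{\r^{m+1}}|x|^{-k}\\\notag
&= |x|^{-k}\Delta_{\r^{m+1}}F - 2k(k+m-3)|x|^{-k-2}F,
\end{align}
using $-2k^2 + (-k)(-k+m-1) = -2k^2 + k^2 - k(m-1) = -k^2 - k(m-1) = -k(k+m-1)$... so I need to be careful with the arithmetic: the coefficient of $|x|^{-k-2}F$ is $2(-k)k + (-k)(-k+m-1) = -2k^2 + k^2 - k(m-1) = -k^2 - km + k = -k(k+m-1)$. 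Evaluated on $S^m$, however, $\Delta_{\r^{m+1}}F(x/|x|)|_{S^m}$ should by \eqref{DSG} equal $\Delta_{S^m}f$, so I would cross-check against that. Iterating the product rule once more to compute $\Delta^2_{\r^{m+1}}(|x|^{-k}F)$, I would expand $\Delta_{\r^{m+1}}$ of the two terms $|x|^{-k}\Delta_{\r^{m+1}}F$ (now a homogeneous polynomial of degree $k-2$, so the same lemma applies with $k$ replaced by $k-2$) and $|x|^{-k-2}F$ (homogeneous of degree $k$, with exponent $-k-2$), collecting all the $|x|$-power factors. Restricting to $|x|=1$ then kills all powers and leaves a polynomial identity; substituting into \eqref{RS2} (equivalently \eqref{RS1}) and bookkeeping the coefficients yields \eqref{D2S}. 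Formula \eqref{D2S2} is then pure algebra: expand $k(k+2)(k-1+m)(k+m-3) + 2(m-3)\cdot k(k-1+m)$ and verify it equals $k^2(k-1+m)^2$, while the extra $\Delta_{\r^{m+1}}F$ term absorbs the $2(m-3)$ and the $-2k(k+m-3)$ into $-2[k^2+(k-1)(m-3)]$; here one uses $\Delta_{S^m}f = (\Delta_{\r^{m+1}}F(x/|x|))|_{S^m} = (\Delta_{\r^{m+1}}F - k(k+m-1)F)|_{S^m}$ to rewrite $\Delta_{S^m}f$ in ambient terms.

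The main obstacle I anticipate is not conceptual but bookkeeping: correctly tracking the homogeneity degrees (which change from $k$ to $k-2$ after one Laplacian) and the corresponding powers of $|x|$ through two applications of the product rule, and then verifying that the two different-looking polynomial coefficients in \eqref{D2S} and \eqref{D2S2} actually agree. A secondary subtlety is to make sure that when I differentiate $|x|^{-k}\Delta_{\r^{m+1}}F$, the cross term $\langle\nabla|x|^{-k},\nabla(\Delta_{\r^{m+1}}F)\rangle$ is handled via Euler's identity applied to the degree-$(k-2)$ polynomial $\Delta_{\r^{m+1}}F$, not degree $k$; getting that exponent right is exactly what produces the $(k+m-3)$ and $(k-1+m)$ factors. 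Once the ambient computation is clean, plugging into \eqref{RS1} is immediate, so essentially all the work is in the two-step Laplacian expansion and the final coefficient-matching, which I would present as a short direct calculation.
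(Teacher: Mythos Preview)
Your proposal is correct and follows essentially the same route as the paper: record the Euler identity and $\Delta_{\r^{m+1}}|x|^{\alpha}=\alpha(\alpha+m-1)|x|^{\alpha-2}$, compute $\Delta_{\r^{m+1}}(|x|^{-k}F)$ and then $\Delta^2_{\r^{m+1}}(|x|^{-k}F)$ by the product rule (tracking that $\Delta_{\r^{m+1}}F$ has degree $k-2$), restrict to $|x|=1$, substitute into \eqref{RS1}, and finally pass from \eqref{D2S} to \eqref{D2S2} via the identity $\Delta_{S^m}f=(\Delta_{\r^{m+1}}F)|_{S^m}-k(k+m-1)f$. Your initial coefficient $-2k(k+m-3)$ in the first Laplacian is a slip you immediately correct to $-k(k+m-1)$; with that in hand the rest is exactly the paper's computation.
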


\begin{proof}
It is easily checked (see also \cite{ABR}, Lemma 4.4) that if $f:S^m \to \r$ with $f=F|_{S^m}$ is a restriction of a homogeneous polynomial  $F$ of degree $k$, then we have
  \begin{align}\label{X1}
  \nabla |x|^{\alpha}& =\alpha |x|^{\alpha-2}\,x,\\\label{X2}
 \Delta_{\r^{m+1}}\, |x|^{\alpha}& = \alpha (\alpha -1+m)\,|x|^{\alpha-2},\\\label{X3}
 \langle x, \nabla F(x)\rangle & =k F(x).
  \end{align}
 Since $f=F|_{S^m}$ is a restriction of a homogeneous polynomial  $F$ of degree $k$, we have  $f (\frac{x}{|x|})=F(\frac{x}{|x|})=|x|^{-k}F(x)$. A straightforward computation using  (\ref{X1})-(\ref{X3}) gives
  \begin{align}\notag
\Delta_{\r^{m+1}}f\left(\frac{x}{|x|}\right)& = \Delta_{\r^{m+1}} (|x|^{-k}F(x)) \\\notag
 & = (\Delta_{\r^{m+1}} \,|x|^{-k})\,F(x)+ |x|^{-k} \Delta_{\r^{m+1}} F(x)+2\langle \nabla |x|^{-k}, \nabla F(x)\rangle\\\label{X4}
     & =  -k (k -1+m)\,|x|^{-k-2}\,F(x)+ |x|^{-k} \Delta_{\r^{m+1}} F(x).
 \end{align}

Evaluate both sides of the above on $S^m$ we have
\begin{align}\label{DS}
(\Delta_{S^m}F|_{S^m})(x)&=\left( \Delta_{\r^{m+1}} |x|^{-k}F|_{S^m}(x)\right)_{|x|=1}\\\notag
&= -k (k -1+m)\,F|_{S^m}(x)+  (\Delta_{\r^{m+1}} F(x))|_{S^m}.
\end{align}

Applying $\Delta_{\r^{m+1}}$ to both sides of  (\ref{X4}) and a further calculation using (\ref{X1})-(\ref{X3})  yields
\begin{align}\label{X5}
& \Delta^2_{\r^{m+1}}f\left( \frac{x}{|x|}\right)= \Delta_{\r^{m+1}}[ -k (k -1+m)\,|x|^{-k-2}\,F(x)+ |x|^{-k} \Delta_{\r^{m+1}} F(x)]\\\notag
&=  -k (k -1+m)\,\Delta_{\r^{m+1}}[|x|^{-k-2}\,F(x)]+ \Delta_{\r^{m+1}}[|x|^{-k} \Delta_{\r^{m+1}} F(x)]\\\notag
&=  k (k+2) (k -1+m) (k+m-3)\;|x|^{-k-4}\,F(x) -2k (k+m-3)\,|x|^{-k-2}\, \Delta_{\r^{m+1}} F(x)\\\notag
&+|x|^{-k} \Delta^2_{\r^{m+1}}F(x).
\end{align}

Evaluating both sides of (\ref{X5}) on $S^m$ and substituting the result into (\ref{RS1}) gives (\ref{D2S}). Finally,  (\ref{D2S2}) is obtained by substituting (\ref{DS}) into (\ref{D2S}).
\end{proof}

\begin{remark} 
Note that Formulas (\ref{DSG})  and (\ref{DS}) were proved in \cite{Sh} in a different way. 
\end{remark}

{\bf 1.2 Bi-eigenvalues, buckling eigenvalues, and their eigenfunctions on the sphere $S^m$}\\

\begin{definition}
A function $f\in C^{\infty}(M)$ is called an eigenfunction of the Laplace operator with eigenvalue $\lambda$ if it solves the PDE
\begin{align}\label{Egen}
\Delta f=-\lambda f.
\end{align}
The set of all eigenvalues (called the spectrum) of $\Delta$ is denoted by $Spec_{\Delta}(M)$. The eigenspace corresponding to the eigenvalue $\lambda_i$ is denoted by 

\begin{align}\notag
V(\lambda_i)=\{f\in C^{\infty}(M)\;|\;  \Delta f=-\lambda_i f\}.
\end{align}

A function $f\in C^{\infty}(M)$ is called an eigenfunction of the bi-Laplace operator with eigenvalue $\mu$ if it solves the PDE
\begin{align}\label{2-E}
\Delta^2 f=\mu f.
\end{align}
For convenience,  eigenfunctions and eigenvalues of the bi-Laplace operator are also called {\bf bi-eigenfunctions}  and {\bf bi-eigenvalue} of the Laplacian respectively.\\

The set of all bi-eigenvalues of $\Delta$ is denoted by $Spec_{\Delta^2}(M)$. The bi-eigenspace corresponding to the bi-eigenvalue $\mu_i$ is denoted by 
\begin{align}\notag
V^2(\mu_i)=\{f\in C^{\infty}(M)\;|\;  \Delta^2 f=\mu_i f\}.
\end{align}

A function $f\in C^{\infty}(M)$ is called a {\bf buckling eigenfunction} with a {\bf buckling eigenvalue} $\nu$ if it solves the PDE
\begin{align}\label{2-ED}
\Delta^2 f=-\nu \Delta f.
\end{align}
The set of all buckling-eigenvalues  is denoted by $Spec_{\Delta^b}(M)$. The buckling-eigenspace corresponding to the buckling-eigenvalue $\nu_i$ is denoted by 
\begin{align}\notag
V^b(\nu_i)=\{f\in C^{\infty}(M)\;|\;  \Delta^2 f=- \nu_i \Delta f\}.
\end{align}
\end{definition}

It is well known (see e.g., \cite{Xi}) that all of the above three types of eigenvalues are countable so we can list them as

\begin{align}\notag
0=& \lambda_0 < \lambda_1<\lambda_2\le \lambda_3\le \cdots +\infty\hskip2cm  (\rm Laplacian\; eigenvalues)\\\notag
0=& \mu_0 < \mu_1 < \mu_2\le \mu_3\le \cdots +\infty\hskip2cm (\rm bi-eigenvalues)\\\notag
0=& \nu_0 < \nu_1 < \nu_2\le \nu_3\le \cdots +\infty\hskip2cm (\rm Buckling\; eigenvalues)\\\notag
\end{align}

\begin{remark}\label{P2}
One can easily check the following\\
(i) The square of any eigenvalue of the Laplacian $\Delta$ is an eigenvalue of  the bi-Laplacian $\Delta^2$, and any eigenfunction is automatically a bi-eigenfunction, i.e.,
\begin{align}
  Spec_{\Delta^2} (M) \supset \{ \lambda^2\;|\; \forall\; \lambda \in Spec_{\Delta} (M) \},\; and\;\; V^2(\lambda^2)\supset V(\lambda).
\end{align}
(ii) The set of all buckling eigenvalues on $(M, g)$ is the same as the set of all eigenvalues of the Laplacian $\Delta$,  and a buckling eigenfunction is precisely a function whose Laplacian is an eigenfunction of $\Delta$, i.e.,
\begin{align}
  Spec_{\Delta^b} (M) = Spec_{\Delta} (M),\; and\;\; V^b(\lambda)=\{ f\in C^{\infty}(M)\;|\; \Delta f \in V(\lambda)\}.
\end{align}
\end{remark}

For eigenvalues and the corresponding eigenspaces of the Laplacian on a sphere, we have the following well-known results.

\begin{proposition}\label{H}(see e.g., \cite{Sh} Proposition 22.2.)
The restriction  to $S^{m}$ of any harmonic homogeneous polynomial of degree $k$ on $\r^{m+1}$ is an eigenfunction of the Laplacian $\Delta_{S^m}$  with eigenvalue $\lambda_k=k(m+k-1)$. Conversely, any eigenfunction of $\Delta_{S^m}$  on the sphere $S^m$ is a restriction of a homogeneous polynomial of degree $k$.
\end{proposition}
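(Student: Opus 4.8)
\emph{Proof proposal.} The plan is to prove the two halves separately: the forward direction is a one-line consequence of a formula already at hand, while the converse needs the completeness of spherical harmonics in $L^2(S^m)$.

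For the forward direction, suppose $F$ is a harmonic homogeneous polynomial of degree $k$ on $\r^{m+1}$, so $\Delta_{\r^{m+1}}F=0$. Then formula (\ref{DS}) from the proof of Corollary \ref{Pk} (or, equivalently, (\ref{LaS}) together with Euler's identity (\ref{X3}), which gives $\partial_r F=kF$ and $\partial_r^2F=k(k-1)F$ on $S^m$) yields at once
\[
\Delta_{S^m}(F|_{S^m})=-k(k+m-1)\,F|_{S^m},
\]
so $F|_{S^m}$ is an eigenfunction with eigenvalue $\lambda_k=k(m+k-1)$. I record for later that $k\mapsto\lambda_k$ is strictly increasing on $\{0,1,2,\dots\}$, hence injective.

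For the converse, let $\mathcal{H}_k\subset C^\infty(S^m)$ denote the (finite-dimensional) space of restrictions to $S^m$ of degree-$k$ harmonic homogeneous polynomials on $\r^{m+1}$. By the forward direction these are eigenspaces of the self-adjoint operator $\Delta_{S^m}$ for the pairwise distinct eigenvalues $\lambda_k$, hence mutually $L^2(S^m)$-orthogonal. The crucial step is to show $\bigoplus_{k\ge 0}\mathcal{H}_k$ is dense in $L^2(S^m)$. For this I would invoke the algebraic decomposition (see \cite{ABR}): every homogeneous polynomial $P$ of degree $d$ on $\r^{m+1}$ can be written $P=H_d+|x|^2H_{d-2}+|x|^4H_{d-4}+\cdots$ with each $H_{d-2j}$ harmonic homogeneous of degree $d-2j$, proved by induction on $d$ using that multiplication by $|x|^2$ is injective on homogeneous polynomials and the Laplacian supplies a complementary surjection. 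Restricting to $S^m$, where $|x|^2\equiv1$, shows $P|_{S^m}\in\bigoplus_j\mathcal{H}_{d-2j}$; thus every polynomial restricted to $S^m$ lies in $\bigoplus_k\mathcal{H}_k$, and since such restrictions are dense in $C(S^m)$ by Stone–Weierstrass (the ambient coordinate functions separate points of $S^m$), they are dense in $L^2(S^m)$. Now given $f\in C^\infty(S^m)$ with $\Delta_{S^m}f=-\lambda f$ and $f\not\equiv0$: for any $h\in\mathcal{H}_k$, self-adjointness gives $(\lambda_k-\lambda)\langle f,h\rangle=\langle f,\Delta_{S^m}h\rangle-\langle\Delta_{S^m}f,h\rangle=0$, so $f\perp\mathcal{H}_k$ whenever $\lambda\ne\lambda_k$. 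By density $f$ cannot be orthogonal to all $\mathcal{H}_k$, so $\lambda=\lambda_{k_0}$ for some $k_0$; by injectivity $\lambda\ne\lambda_k$ for $k\ne k_0$, hence $f$ is orthogonal to $\bigoplus_{k\ne k_0}\mathcal{H}_k$ and therefore $f\in\mathcal{H}_{k_0}$, i.e. $f$ is the restriction of a harmonic homogeneous polynomial of degree $k_0$, in particular of a homogeneous polynomial of degree $k_0$.

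The main obstacle is precisely this density/completeness of spherical harmonics, which rests on the polynomial decomposition $P=\sum_j|x|^{2j}H_{d-2j}$ and Stone–Weierstrass. An alternative, more hands-on route for the converse is to extend an eigenfunction $f$ radially by $u(x)=|x|^{k}\,f(x/|x|)$, with $k$ the nonnegative root of $k(k+m-1)=\lambda$; the identity $\Delta_{\r^{m+1}}u=|x|^{k-2}\big(k(k+m-1)g+\Delta_{S^m}g\big)\circ\pi$ makes $u$ harmonic on $\r^{m+1}\setminus\{0\}$, but one then still has to show $k$ is a nonnegative integer and that the singularity at $0$ is removable so that $u$ is a genuine polynomial — which amounts to the same spectral information. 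For this reason I would present the completeness argument as the primary proof.
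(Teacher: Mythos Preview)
Your argument is correct in both directions. The forward half is an immediate consequence of (\ref{DS}), exactly as you say, and your converse via the $L^2$-completeness of spherical harmonics together with self-adjointness is the standard route and is carried out cleanly.

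There is, however, nothing to compare against: the paper does not supply its own proof of this proposition. It is stated as a known fact with a citation to \cite{Sh}, Proposition~22.2, and the paper proceeds directly to Theorem~\ref{EV}. The completeness statement you invoke (the decomposition $L^2(S^m)=\bigoplus_k\mathcal{H}^k$) is itself quoted later in the paper from \cite{ABR}, Theorem~5.12, in the proof of Theorem~\ref{EV}, so your argument is entirely consistent with the references the paper relies on. In effect you have filled in a proof the paper chose to outsource; your approach is the one found in the cited sources, and the orthogonality computation you give is the same mechanism the paper uses one step later for the bi-Laplacian in Theorem~\ref{EV}.

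One minor remark: in your final alternative sketch you write ``$k(k+m-1)g$'' where you presumably mean $f$ rather than $g$; and the removable-singularity route you mention does work once one knows $\lambda=\lambda_{k_0}$ for integer $k_0$ (growth $|x|^{k_0}$ near $0$ with $k_0\ge0$ forces removability, and then a harmonic function of polynomial growth is a polynomial), but as you correctly note, identifying $\lambda$ as some $\lambda_{k_0}$ already requires the spectral/completeness input, so this alternative is not genuinely independent.
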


Now  we are ready to prove the following theorem which gives complete descriptions of bi-eigenvalues and buckling eigenvalues and the corresponding eigenspaces of the sphere.
\begin{theorem}\label{EV}
(I) All eigenvalues of bi-Laplacian on $S^m$ come from a square of an eigenvalue of the Laplacian, and bi-eigenfunction on $S^m$ are precisely the restrictions of harmonic homogenous polynomial in $\r^{m+1}$. This can described as
\begin{align}
  Spec_{\Delta^2} (S^m) =&\; \{ \lambda_k^2\;|\;  \lambda_k=k(m+k-1),\; \forall\; k \in \mathbb{N} \},\; and\;\\
  &\; V^2(\lambda_k^2)=\mathcal{H}^k,
\end{align}
where $\mathcal{H}^k$ denotes the set of harmonic homogeneous polynomials of degree $k$. 
In particular, the first nonzero eigenvalue of the bi-Laplacian on $S^m$ is $\mu_1=m^2$.\\

(II) The set of all buckling eigenvalues on $S^m$ is the same as the set of all eigenvalues of the Laplacian $\Delta$ on $S^m$, and buckling eigenfunctions on $S^m$ are precisely the restrictions of polynomial of the form $h_k+c|x|^2$. This can be described as
\begin{align}\notag
 Spec_{\Delta^b} (S^m) =&\; Spec_{\Delta} (S^m)= \{ \lambda_k=k(m+k-1)\;|\; \forall\; k \in \mathbb{N}  \}, and\\\notag &\;V^b(\lambda_k)=\mathcal{H}^k\oplus \{c |x|^2\}.
\end{align}
In particular, the first nonzero buckling eigenvalue of  $S^m$ is $\mu_1=m$.
\end{theorem}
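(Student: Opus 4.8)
The plan is to reduce both parts to the spectral decomposition $L^{2}(S^{m})=\bigoplus_{k\ge 0}\mathcal{H}^{k}|_{S^{m}}$ into spherical harmonics (Proposition \ref{H}), combined with the elementary fact that the Laplace eigenvalues $\lambda_{k}=k(m+k-1)$ are strictly increasing in $k$.

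For part (I), I would first note that since $\Delta^{2}_{S^{m}}=\Delta_{S^{m}}\circ\Delta_{S^{m}}$, the operator $\Delta^{2}_{S^{m}}$ acts on each $\mathcal{H}^{k}|_{S^{m}}$ as multiplication by $\lambda_{k}^{2}$; this also drops out of Corollary \ref{Pk}, formula (\ref{D2S2}), applied to a harmonic homogeneous polynomial $F$, for which $\Delta_{\r^{m+1}}F=\Delta^{2}_{\r^{m+1}}F=0$ and $k^{2}(k-1+m)^{2}=\lambda_{k}^{2}$. Hence $\mathcal{H}^{k}\subseteq V^{2}(\lambda_{k}^{2})$. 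Conversely, given a bi-eigenfunction $\Delta^{2}_{S^{m}}f=\mu f$ with $f\ne 0$, expand $f=\sum_{k}f_{k}$ with $f_{k}\in\mathcal{H}^{k}|_{S^{m}}$ (the series converging in $C^{\infty}$ since $f$ is smooth on a closed manifold); applying $\Delta^{2}_{S^{m}}$ termwise gives $\sum_{k}(\lambda_{k}^{2}-\mu)f_{k}=0$, and the pairwise $L^{2}$-orthogonality of the $\mathcal{H}^{k}$ forces $f_{k}=0$ whenever $\lambda_{k}^{2}\ne\mu$. Thus $\mu=\lambda_{k}^{2}$ for some $k$, so $Spec_{\Delta^{2}}(S^{m})=\{\lambda_{k}^{2}\}$. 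Since $k\mapsto\lambda_{k}=k^{2}+(m-1)k$ is strictly increasing on $\mathbb{N}$ when $m\ge 1$, the map $k\mapsto\lambda_{k}^{2}$ is injective, so no two distinct $\mathcal{H}^{j},\mathcal{H}^{k}$ share a bi-eigenvalue and $V^{2}(\lambda_{k}^{2})=\mathcal{H}^{k}$ exactly; in particular $\mu_{1}=\lambda_{1}^{2}=m^{2}$.

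For part (II), I would start from Remark \ref{P2}(ii), which already yields $Spec_{\Delta^{b}}(S^{m})=Spec_{\Delta}(S^{m})=\{\lambda_{k}\}$ and $V^{b}(\nu)=\{f\in C^{\infty}(S^{m})\mid\Delta_{S^{m}}f\in V(\nu)\}$. Fixing $\nu=\lambda_{k}$ with $k\ge 1$ and expanding $f=\sum_{j}f_{j}$ as before, one has $\Delta_{S^{m}}f=\sum_{j}(-\lambda_{j})f_{j}$, whose $\mathcal{H}^{j}$-component is $-\lambda_{j}f_{j}$; requiring this to lie in $\mathcal{H}^{k}|_{S^{m}}$ forces $\lambda_{j}f_{j}=0$ for all $j\ne k$, i.e. $f_{j}=0$ for every $j\ge 1$ with $j\ne k$, while the constant part $f_{0}$ (annihilated by $\Delta_{S^{m}}$) is unconstrained. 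Hence $f=f_{k}+c$, which on $S^{m}$ is exactly the restriction of a polynomial $h_{k}+c|x|^{2}$; the reverse inclusion is the trivial computation $\Delta^{2}_{S^{m}}(h_{k}|_{S^{m}}+c)=\lambda_{k}^{2}h_{k}|_{S^{m}}=-\lambda_{k}\Delta_{S^{m}}(h_{k}|_{S^{m}}+c)$. This gives $V^{b}(\lambda_{k})=\mathcal{H}^{k}\oplus\{c|x|^{2}\}$. For the leftover value $\nu=\lambda_{0}=0$, the condition $\Delta_{S^{m}}f\in V(0)$ says $\Delta_{S^{m}}f$ is constant; integrating over the closed manifold $S^{m}$ shows that constant is $0$, so $f$ is harmonic, hence constant, in agreement with the stated formula at $k=0$. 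Since the first nonzero Laplace eigenvalue of $S^{m}$ is $\lambda_{1}=m$, so is the first nonzero buckling eigenvalue.

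The computations are all routine; the only points needing care are the justification that a smooth bi- or buckling eigenfunction may be expanded in spherical harmonics with the operators applied termwise (ensured by elliptic regularity and the $C^{\infty}$-convergence of the Fourier--Laplace series on a closed manifold, together with the standing assumption $f\in C^{\infty}$ in the definitions), and the strict monotonicity of $\lambda_{k}$, which is the elementary inequality $k(m+k-1)<(k+1)(m+k)$ for $m\ge 1$ that rules out accidental coincidences among the $\lambda_{k}^{2}$. Everything else is bookkeeping with $L^{2}(S^{m})=\bigoplus_{k}\mathcal{H}^{k}$.
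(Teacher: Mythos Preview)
Your proposal is correct and follows essentially the same approach as the paper: both arguments rest on the spherical-harmonic decomposition $L^{2}(S^{m})=\bigoplus_{k}\mathcal{H}^{k}$ and the orthogonality of the eigenspaces to determine which components of $f$ survive. The only stylistic difference is that the paper moves $\Delta^{2}_{S^{m}}$ across the $L^{2}$ inner product via self-adjointness rather than applying it termwise to the $C^{\infty}$-convergent expansion as you do, but the two manipulations are equivalent here and lead to the identical conclusion $(\lambda_{k}^{2}-\mu)f_{k}=0$.
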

\begin{proof}

Recall that for a closed manifold $(M, g)$, $C^{\infty}(M)$ is an inner product space with the inner product defined by
\begin{align}\label{HI}
\langle \alpha, \beta\rangle= \int_M \alpha \beta dv_g
\end{align}
and the Laplace operator $\Delta: C^{\infty}(M)\to C^{\infty}(M)$ is a self-adjoint operator, so we have
\begin{align}
\langle \Delta \alpha, \beta\rangle= \langle \alpha, \Delta \beta\rangle, \;\;\forall\; \alpha, \beta\in \; C^{\infty}(M). 
\end{align}

Note also that  the space $L^2(S^m)$ of all real-valued square-integrable functions on $S^m$ is a Hilbert space with the inner product also defined by (\ref{HI}).

Let $\lambda_k$ be an eigenvalue of the Laplacian on $S^m$, i.e., $\Delta_{S^m} f=-\lambda f$ for some function on $S^m$. Then, as in Remark \ref{P2}, $\lambda_k^2$ is a bi-eigenvalue. Conversely, if $\Delta^2_{S^m}f=\mu f$ for a constant $\mu$, we are to prove that $\mu=\lambda_k^2$ for one and only one $k\in \mathbb{N}$. It is well known (see e.g., \cite{ABR}, Theorem 5.12)  that $L^2(S^m)=\oplus_{i=0}^{\infty} \mathcal{H}^i$ as the direct sum of spherical harmonic polynomials of homogeneous degree $k$.
For any smooth function $f$ on $S^m$, we have $f=\sum_{i=0}^{\infty}h_i$ in $L^2(S^m)$. For $k\in \mathbb{N}$, we have
\begin{align}
|| h_k||_{L^2(S^m)} & =\langle f, h_k\rangle_{L^2(S^m)}=\frac{1}{\lambda_k^2}\langle f,  \Delta_{S^m}^2h_k\rangle_{L^2(S^m)}=\frac{1}{\lambda_k^2}\langle \Delta_{S^m}^2f,  h_k\rangle_{L^2(S^m)}\\\notag
& =\frac{1}{\lambda_k^2}\langle \mu f,  h_k\rangle_{L^2(S^m)}=\frac{\mu}{\lambda_k^2}\langle f,  h_k\rangle_{L^2(S^m)}=\\\notag
& = \frac{\mu}{\lambda_k^2}\langle f, h_k\rangle_{L^2(S^m)}=\frac{\mu}{\lambda_k^2} || h_k||_{L^2(S^m)}.
\end{align}
It follows that 
\begin{equation}
(1-\frac{\mu}{\lambda_k^2}) || h_k||_{L^2(S^m)}=0.
\end{equation}

Case (i):  if $\mu\ne \lambda^2_k$ for any $k\ge 1$, then $h_k=0, \forall\; k\ge1$ in $L^2(S^m)$ and hence $f=h_0$ is a constant function. In this case, we have $\Delta^2_{S^m} f=0= \mu f$ and $\mu=0$.

Case (ii): if $\mu=\lambda_k^2$ for some $k\in \mathbb{N}$, i.e., $\Delta_{S^m}^2 f=\mu f= \lambda^2_k f$, we will show that $f$ is orthogonal to $\mathcal{H}^l$ for any $l\ge 1$ and $l \ne k$ in  $L^2(S^m)$. In fact, for any $l\ge 1$ and any $h_l\in \mathcal{H}^l$ we have
\begin{align}
\langle h_l,  f\rangle_{L^2(S^m)} & =\frac{1}{\lambda_l^2}\langle \Delta_{S^m}^2 h_l,   f\rangle_{L^2(S^m)}=\frac{1}{\lambda_l^2}\langle  h_l,  \Delta_{S^m}^2 f\rangle_{L^2(S^m)}\\\notag
&  = \frac{1}{\lambda_l^2}\langle  h_l,   \lambda_k^2 f\rangle_{L^2(S^m)}= \frac{\lambda_k^2}{\lambda_l^2}\langle  h_l,   f\rangle_{L^2(S^m)}.
\end{align}
From this and the fact that $ \frac{\lambda_k^2}{\lambda_l^2}\ne 1$ we conclude that $f$ is orthogonal to $\mathcal{H}^l$ for any $l\ge 1$ and any $l\ne k$ in  $L^2(S^m)$, so we conclude that $f$ must be of the form $f=h_0+h_k\in \mathcal{H}^0\oplus \mathcal{H}^k$. In this case, $\Delta_{S^m}^2 f=\lambda^2_k f$ if and only if $h_0=0$. Thus, any bi-eigenfunction corresponding to the bi-eigenvalue $\lambda_k^2$ is a restriction of a harmonic homogeneous polynomial of degree $k$. This completes the proof of Statement (I).\\

For Statement (II), by Remark \ref{P2}, we only need to prove that the buckling eigenspace corresponding to the buckling eigenvalue $\lambda_k$ is $V^b(\lambda_k)=\mathcal{H}^k\oplus\{c|x|^2\;|\; c\in \r\}$. In fact, if $f\in C^{\infty}(S^m)$  is a buckling eigenfunction corresponding to eigenvalue $\lambda_k$ such that $\Delta_{S^m}^2f=-\lambda_k\,\Delta_{S^m} f$, then, rewriting it as $\Delta_{S^m}(\Delta_{S^m} f)=-\lambda_k\,(\Delta_{S^m} f)$, we see that $\Delta_{S^m} f$ is an eigenfunction of $\Delta_{S^m} $ with eigenvalue $\lambda_k$. By Proposition \ref{H}, we have  
\begin{align} \label{21}
\Delta_{S^m} f=q_k|_{S^m}\; for\; some \; q_k\in \mathcal{H}^k.
\end{align}
 Using the unique representation of $f$ in $L^2(S^m)=\oplus_{i=0}^{\infty}\mathcal{H}^i$ we have $f=\sum_{i=0}^{\infty} h_i$. We compute the ith components of $f$ for $i\ge 1$ to have
\begin{align}\label{22}
\langle f, h_i\rangle_{L^2(S^m)} & =-\frac{1}{\lambda_i}\langle f,  \Delta_{S^m} h_i\rangle_{L^2(S^m)}=- \frac{1}{\lambda_i}\langle \Delta_{S^m} f,  h_i\rangle_{L^2(S^m)}\\\notag
& =- \frac{1}{\lambda_i}\langle q_k,  h_i\rangle_{L^2(S^m)}=0,\;\forall\; i\ge 1,and\; i\ne k,
\end{align}
where the 3rd ``$=$" was obtained by using (\ref{21}) and the last ``$=$" holds because $ (q_k, h_i) \in \mathcal{H}^k\oplus \mathcal{H}^i$ and hence $\langle q_k,  h_i\rangle_{L^2(S^m)}=0,  \forall \, i\ge 1,\;{\rm and}\;i\ne k$. It follows from (\ref{22}) that $h_i=0$ for all $i\ne 0, k$ and hence $f=\sum_{i=0}^{\infty} h_i=(h_k+h_0)|_{S^m}$ or $f=(h_k+c|x|^2)|_{S^m}$ for $h_0=c$, which completes the proof of the theorem.
\end{proof}

\begin{corollary}
The restriction of any polynomial of homogeneous degree $2$ to $S^m$ is a buckling eigenfunction (i.e., solution of $\Delta_{S^m}^2f =-\mu \Delta_{S^m} f$) on sphere with $\mu=2(m+1)$. 
\end{corollary}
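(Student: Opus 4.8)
The plan is to deduce the corollary from Theorem~\ref{EV}(II) by writing every homogeneous quadratic as a harmonic polynomial plus a multiple of $|x|^2$. First I would recall the classical fact that any homogeneous polynomial $F$ of degree $2$ on $\r^{m+1}$ splits as
\[
F = h_2 + c\,|x|^2, \qquad h_2\in\mathcal{H}^2,\quad c\in\r ,
\]
where the constant $c$ is forced: since $F$ has degree $2$, $\Delta_{\r^{m+1}}F$ is a constant, and $\Delta_{\r^{m+1}}|x|^2 = 2(m+1)$ by~(\ref{X2}), so taking $c = \frac{1}{2(m+1)}\,\Delta_{\r^{m+1}}F$ makes $F - c|x|^2$ harmonic. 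Restricting to $S^m$ shows $f = F|_{S^m}$ lies in $\mathcal{H}^2 \oplus \{c|x|^2\}$. By Theorem~\ref{EV}(II) with $k=2$, this space is exactly the buckling eigenspace $V^b(\lambda_2)$ for the buckling eigenvalue $\lambda_2 = 2(m+2-1) = 2(m+1)$; hence $f$ satisfies $\Delta_{S^m}^2 f = -2(m+1)\,\Delta_{S^m} f$, which is the assertion with $\mu = 2(m+1)$.

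As a self-contained check I would also run the computation directly from Corollary~\ref{Pk} at $k=2$. Because $F$ is homogeneous of degree $2$, $\Delta_{\r^{m+1}}F$ is constant and $\Delta^2_{\r^{m+1}}F = 0$, so, using $k^2 + (k-1)(m-3) = m+1$ when $k=2$, formula~(\ref{D2S2}) collapses to
\[
\Delta_{S^m}^2 f = 4(m+1)^2\, f - 2(m+1)\,(\Delta_{\r^{m+1}}F)|_{S^m} ,
\]
while~(\ref{DS}) at $k=2$ gives $\Delta_{S^m} f = -2(m+1)\,f + (\Delta_{\r^{m+1}}F)|_{S^m}$. Multiplying the latter by $-2(m+1)$ reproduces the right-hand side of the former, so again $\Delta_{S^m}^2 f = -2(m+1)\,\Delta_{S^m} f$.

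There is no genuine obstacle here: the corollary is a direct specialization of the results already established. The only two points that deserve a line of care are (a) the existence of the harmonic decomposition of a quadratic --- equivalently, the observation that the sole obstruction to harmonicity of a degree-$2$ polynomial is its (constant) Euclidean Laplacian --- and (b) the arithmetic of substituting $k=2$ into the coefficient $2[k^2+(k-1)(m-3)]$ of~(\ref{D2S2}) and into $-k(k-1+m)$ of~(\ref{DS}). I would present the first route as the proof, since it makes transparent why the buckling eigenvalue equals $\lambda_2$, and mention the second only as a computational confirmation.
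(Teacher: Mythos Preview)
Your proof is correct and your primary route is exactly the paper's: decompose an arbitrary homogeneous quadratic as $h_2 + c|x|^2$ and invoke Theorem~\ref{EV}(II) at $k=2$ to read off $\mu=\lambda_2=2(m+1)$. The paper's proof is in fact terser than yours (it simply cites the decomposition and Statement~(II)); your explicit determination of $c$ and your direct verification via Corollary~\ref{Pk} are additional detail the paper omits.
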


\begin{proof}
It is well known (see e.g., \cite{ABR}) any homogeneous polynomial of degree $2$ can be written as $p_2(x)=h_2(x)+|x|^2h_0$. From this and Statement (II) of Theorem \ref{EV} we obtain the corollary.
\end{proof}

\begin{remark}
Although the buckling eigenvalue $\mu=2(m+1)$ is the second eigenvalue of the Laplacian $\Delta_{S^m}$,  the buckling eigenspace $V^b({\mu})$ corresponds to $\mu=2(m+1)$ has one dimension bigger than the eigenspace $V({\mu})$ of the Laplace operator. The buckling eigenspace  includes all homogeneous polynomials of degree 2 whilst the latter consists of only all {\bf harmonic} homogeneous polynomials of degree 2.
\end{remark}

For any integer $k\ge 2$, the $k$-Laplacian is define to be $\Delta^k f =\Delta (\Delta^{k-1})$. We define $\mu$ to be an eigenvalue of the  $k$-Laplacian if there is a nonconstant function $f$ such that  $\Delta^k f =(-1)^k\mu \,f$. Then, a similar proof gives the following description of all eigenvalues of the $k$-Laplacian on the sphere.
\begin{corollary}
The spectrum of the $k$-Laplacian on the sphere $S^m$ is 
\begin{align}\notag
Spec_{\Delta^k}(S^m)=\{\lambda_i^k\;|\; \lambda_i=i(m+i-1), \forall\; i\in \mathbb{N}\}. 
\end{align}
In particular, the first nonzero eigenvalue of $\Delta^k$  on $S^m$ is $ m^k$.
\end{corollary}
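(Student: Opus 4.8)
The plan is to mimic the proof of Theorem \ref{EV}(I) almost verbatim, replacing the operator $\Delta^2$ by $\Delta^k$ throughout. First I would recall that $L^2(S^m)=\oplus_{i=0}^{\infty}\mathcal{H}^i$ as a Hilbert-space direct sum of eigenspaces of $\Delta_{S^m}$, with $\Delta_{S^m} h_i=-\lambda_i h_i$ for $h_i\in\mathcal{H}^i$ and $\lambda_i=i(m+i-1)$ (Proposition \ref{H} and \cite{ABR}, Theorem 5.12). Iterating gives $\Delta^k_{S^m} h_i=(-1)^k\lambda_i^k h_i$, so each $\lambda_i^k$ lies in $Spec_{\Delta^k}(S^m)$ and $\mathcal{H}^i\subseteq$ the corresponding eigenspace; this is the easy inclusion. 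For the converse, suppose $\Delta^k_{S^m} f=(-1)^k\mu f$ for some nonconstant $f$, and write $f=\sum_{i\ge 0} h_i$. Since $\Delta^k_{S^m}$ is self-adjoint on $C^\infty(S^m)$ (being a composition of the self-adjoint operator $\Delta_{S^m}$ with itself an even or odd number of times — self-adjointness is preserved under composition here because $\Delta_{S^m}$ is self-adjoint and the composition of self-adjoint operators with the same one is self-adjoint), we can test against $h_j$ for each $j\ge 1$:
\begin{align}\notag
\lambda_j^k\langle f,h_j\rangle_{L^2} &=\langle f,(-1)^k\Delta^k_{S^m}h_j\rangle_{L^2}=\langle (-1)^k\Delta^k_{S^m}f,h_j\rangle_{L^2}=\mu\langle f,h_j\rangle_{L^2},
\end{align}
so $(\lambda_j^k-\mu)\langle f,h_j\rangle_{L^2}=0$ for every $j\ge 1$.

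From this the argument splits exactly as before. If $\mu$ is not of the form $\lambda_j^k$ for any $j\ge 1$, then $h_j=0$ for all $j\ge 1$, so $f$ is constant, contradicting the hypothesis; hence no such $\mu$ occurs. If $\mu=\lambda_i^k$ for some $i\ge 1$, then $\langle f,h_j\rangle_{L^2}=0$ for all $j\ge 1$ with $j\ne i$ (using that $j\mapsto\lambda_j$ is strictly increasing on $\mathbb{N}$, so $\lambda_j^k\ne\lambda_i^k$ for $j\ne i$), forcing $f=h_0+h_i$; then $\Delta^k_{S^m}f=(-1)^k\lambda_i^k f$ holds iff $h_0=0$, i.e. $f=h_i\in\mathcal{H}^i$. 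This shows $Spec_{\Delta^k}(S^m)=\{\lambda_i^k\mid i\in\mathbb{N}\}$ and identifies the eigenspaces, and since $\lambda_1=m$ is the smallest nonzero Laplacian eigenvalue, $m^k$ is the smallest nonzero $\Delta^k$-eigenvalue.

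The one point that needs a line of care — the "main obstacle," though it is minor — is making sure the strict monotonicity $\lambda_1<\lambda_2<\cdots$ of the \emph{distinct} Laplacian eigenvalues is what is being used, so that $\lambda_j^k=\lambda_i^k$ genuinely fails for $j\ne i$; this is immediate since $t\mapsto t^k$ is strictly increasing on $[0,\infty)$ and the $\lambda_i=i(m+i-1)$ are strictly increasing in $i$. Everything else is a routine transcription of the $k=2$ proof, so I would present it compactly, pointing to Theorem \ref{EV} for the structure and only spelling out the testing identity above and the two cases.
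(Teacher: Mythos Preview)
Your proposal is correct and follows exactly the route the paper intends: the paper does not write out a separate proof but simply says ``a similar proof gives'' the corollary, and your argument is precisely that similar proof---the same spherical-harmonic decomposition, the same self-adjointness testing against $h_j$, and the same two-case analysis as in Theorem~\ref{EV}(I), with $\Delta^2$ replaced by $\Delta^k$ and $\lambda_j^2$ by $\lambda_j^k$. The only cosmetic point is that the self-adjointness of $\Delta^k$ is most cleanly justified by iterating $\langle \Delta f,g\rangle=\langle f,\Delta g\rangle$ $k$ times, rather than by the phrase about ``composition of self-adjoint operators.''
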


Many recent work (see e.g., \cite{CY06, WX1, CC, Xi, WX2, ZZ}) study bi-eigenvalue and buckling bi-eigenvalue  problems on a domain of a Riemannian manifold with boundary conditions like 
\begin{align}\notag
  The\;Clamped\; plate\; problem:&\;\begin{cases}
\Delta^2 f=\mu f\;\;on \; M\\
f|_{\partial M}=\frac{\partial f}{\partial n}|_{\partial M}=0,\\
\end{cases}\\\notag
  The\;buckling\; eigenvalue\; problem:&\; \begin{cases}
\Delta^2 f=-\nu \Delta f\;\;on \; M\\
f|_{\partial M}=\frac{\partial f}{\partial n}|_{\partial M}=0.
\end{cases}
\end{align}

\section{Radial biharmonic functions on model space $M^m_{\sigma}(o)$}

It is well known that a model space refers to the Riemannian manifold given by the warped product
\begin{equation}\notag
M_{\sigma}^m(o) = \left (  (0,+\infty)\times S^{m-1},  dr^2 + \sigma^2(r)\, g^{S^{m-1}}\right ),
\end{equation}
where $ (\,S^{m-1},\, g^{S^{m-1}} \, )$ is the standard $(m-1)$-dimensional unit sphere, and  $\sigma:(0,\infty)\to (0,\infty) $ is a smooth function.\\

This model space  plays a very important role in mathematics and physics, and it includes all three space forms $\r^m, S^m$, and $ H^m$ as special cases corresponding to $\sigma(r) =r,\; \sin r$, and $ \sinh r$ respectively.\\

In this section we will show that the biharmonic equation for radial (or rotationally symmetric) solutions on the model space  $M^m_{\sigma}(o)=(\mathbb{R}^+\times S^{m-1}, {\rm d}r^2+\sigma^2(r)g^{S^{m-1}})$ is completely solvable. We give an integral formula for the solutions which  are used to produce families of infinitely many proper biharmonic functions on space forms.

\begin{lemma}\label{MT1}
The Laplacian and the bi-Laplacian operators on the model space $M^m_{\sigma}(o)=(\mathbb{R}^+\times S^{m-1}, {\rm d}r^2+\sigma^2(r)g^{S^{m-1}})$ can be described as
\begin{align}\label{LM}
\Delta_{\sigma} f= & f_{rr} +(m-1)\frac{\sigma'}{\sigma}\, f_r+ \frac{1}{\sigma^2}\,\Delta_{S^{m-1}} f\\\label{2LM}
\Delta_{\sigma}^2 f= & (\Delta_{\sigma} f)_{rr} +(m-1)\frac{\sigma'}{\sigma}\, (\Delta_{\sigma} f)_r+ \frac{1}{\sigma^2}\,\Delta_{S^{m-1}} (\Delta_{\sigma} f).
\end{align}

In particular, for a radial function $f(r, \theta)=f(r)$, we have 
\begin{align}\label{2HF}
\Delta_{\sigma} f= & f'' +(m-1)\frac{\sigma'}{\sigma}\, f'\\\label{LMr}
\Delta_{\sigma}^2 f= & (\Delta_{\sigma} f)''+(m-1)\frac{\sigma'}{\sigma}(\Delta_{\sigma} f)'.
\end{align}
\end{lemma}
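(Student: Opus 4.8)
The plan is to read off (\ref{LM}) from the invariant coordinate expression of the Laplace--Beltrami operator and then to obtain (\ref{2LM}) at no cost, since by definition $\Delta_\sigma^2 f=\Delta_\sigma(\Delta_\sigma f)$. First I would fix local coordinates $(r,\theta^1,\dots,\theta^{m-1})$ on $M^m_\sigma(o)$, with the $\theta^a$ local coordinates on the fibre $S^{m-1}$. For the warped metric $g=dr^2+\sigma^2(r)\,g^{S^{m-1}}$ one has $g_{rr}=1$, $g_{ra}=0$, $g_{ab}=\sigma^2(r)\,g^{S^{m-1}}_{ab}$, hence $g^{rr}=1$, $g^{ra}=0$, $g^{ab}=\sigma^{-2}(r)\,(g^{S^{m-1}})^{ab}$, and $\sqrt{|g|}=\sigma^{m-1}(r)\sqrt{|g^{S^{m-1}}|}$. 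Plugging these into $\Delta_g f=\frac{1}{\sqrt{|g|}}\,\partial_i\!\big(\sqrt{|g|}\,g^{ij}\partial_j f\big)$ and splitting the sum into the purely radial term ($i=j=r$) and the fibre terms, the radial part gives $\frac{1}{\sigma^{m-1}}\partial_r\!\big(\sigma^{m-1}f_r\big)=f_{rr}+(m-1)\frac{\sigma'}{\sigma}f_r$, while the fibre part gives $\frac{1}{\sigma^2}\cdot\frac{1}{\sqrt{|g^{S^{m-1}}|}}\,\partial_a\!\big(\sqrt{|g^{S^{m-1}}|}\,(g^{S^{m-1}})^{ab}\partial_b f\big)=\frac{1}{\sigma^2}\Delta_{S^{m-1}}f$, the factor $\sigma^{m-1}$ being $\theta$-independent and cancelling. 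Summing yields (\ref{LM}); equivalently one may simply quote the standard warped-product Laplacian formula with warping function $\sigma$ and fibre dimension $m-1$.

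Formula (\ref{2LM}) is then immediate: applying (\ref{LM}) to the function $\Delta_\sigma f$ in place of $f$ gives $\Delta_\sigma^2 f=(\Delta_\sigma f)_{rr}+(m-1)\frac{\sigma'}{\sigma}(\Delta_\sigma f)_r+\frac{1}{\sigma^2}\Delta_{S^{m-1}}(\Delta_\sigma f)$. For the radial specialisations, if $f(r,\theta)=f(r)$ then $\Delta_{S^{m-1}}f=0$ and (\ref{LM}) collapses to (\ref{2HF}); moreover $\Delta_\sigma f=f''+(m-1)\frac{\sigma'}{\sigma}f'$ is again a function of $r$ alone, so $\Delta_{S^{m-1}}(\Delta_\sigma f)=0$ and (\ref{2LM}) collapses to (\ref{LMr}).

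The computation is routine and there is no genuine obstacle; the only points needing a little care are the bookkeeping of the coefficient $m-1=\dim S^{m-1}$, which enters through $\partial_r\log\sqrt{|g|}=\partial_r\log\sigma^{m-1}=(m-1)\sigma'/\sigma$, and the observation used in passing from (\ref{2LM}) to (\ref{LMr}) that the class of radial functions is preserved by $\Delta_\sigma$, so that the spherical term really does vanish at the second stage as well.
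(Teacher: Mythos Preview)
Your proof is correct and follows essentially the same route as the paper: set up local coordinates adapted to the warped product, record the block form of the metric, and compute $\Delta_\sigma$ directly, then iterate for $\Delta_\sigma^2$ and specialise to radial $f$. The only cosmetic difference is that the paper uses the Christoffel-symbol form $\Delta_\sigma f=g^{ab}f_{ab}-g^{ab}\bar\Gamma_{ab}^{\,c}f_c$ together with the warped-product identities $\bar\Gamma_{ij}^{\,0}=-\sigma\sigma'h_{ij}$, $\bar\Gamma_{ij}^{\,k}=\Gamma_{ij}^{\,k}$, whereas you use the divergence form $\Delta_g f=|g|^{-1/2}\partial_i(|g|^{1/2}g^{ij}\partial_j f)$ and the factorisation $\sqrt{|g|}=\sigma^{m-1}\sqrt{|g^{S^{m-1}}|}$; both are standard and lead to the same three-term formula with the same bookkeeping of the coefficient $(m-1)$.
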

\begin{proof}
Let $h$ denote the standard metric on the sphere  $S^{m-1}$ and $g={\rm d}r^2+\sigma^2(r) h$ be the warped product metric. ${\bar\Gamma}_{ab}^c$ (respectively $\Gamma_{ij}^k$) the connection coefficients of $g$ and $h$ respectively.
We use a local coordinate system $\{r, u^i\}$ on $M^m_{\sigma}(o)$ where $\{ u^i\}$ is a local coordinate system on $S^{m-1}$. We will also use the following indices and their ranges: $a, b, c, d=0, 1, 2,\cdots, m-1$ $i, j, k, l=1, 2,\cdots, m-1$ for tensors on $(M^m_{\sigma}(o), g={\rm d}r^2+\sigma^2(r) h)$ and $(S^{m-1}, h)$ respectively. Then, it is easily checked that
\begin{align}\notag
&g_{00}=g^{00}=1,\; g_{0i}=g^{0i}=0,\;\; g_{ij}=\sigma^2(r)h_{ij},\;\; g^{ij}=\sigma^{-2}h^{ij}.
\end{align}

  By a straightforward  computation we have, for a function $f=f(x_0, x_1, \cdots, x_{m-1})$ with $(x_1, \cdots,x_{m-1})=(u_1, \cdots, u_{m-1})\in S^{m-1}$, 

\begin{align}\label{L1}
\Delta_{\sigma} f=&g^{ab}f_{ab}-g^{ab}{\bar\Gamma}_{ab}^cf_c=g^{00}f_{00} +g^{ij}f_{ij} -g^{00}{\bar\Gamma}_{00}^cf_c-g^{ij}{\bar\Gamma}_{ij}^cf_c\\\notag
=&g^{00}f_{00} +g^{ij}f_{ij} -g^{00}{\bar\Gamma}_{00}^0f_0 -g^{00}{\bar\Gamma}_{00}^k f_k-g^{ij}{\bar\Gamma}_{ij}^0f_0 -g^{ij}{\bar\Gamma}_{ij}^k f_k\\\notag
=&f_{rr} +g^{ij}f_{ij}  -{\bar\Gamma}_{00}^k f_k-g^{ij}{\bar\Gamma}_{ij}^0f_0 -g^{ij}{\bar\Gamma}_{ij}^k f_k\\\notag
=&f_{rr} +(m-1)\frac{\sigma'}{\sigma}\, f_r+ \frac{1}{\sigma^2}\,\Delta_{S^{m-1}} f
\end{align}
where in obtaining the last equality we have used the identities of the connection coefficients of  the warped product metrics 
\begin{align}\notag
{\bar\Gamma}_{ij}^0=-\sigma\sigma'h_{ij},\;\;{\bar\Gamma}_{ij}^k=\Gamma_{ij}^k.
\end{align}

\begin{equation}\notag
\begin{aligned}
\Delta_{\sigma}^2 f=&(\Delta_{\sigma} f)_{rr} +(m-1)\frac{\sigma'}{\sigma}\, (\Delta_{\sigma} f)_r+ \frac{1}{\sigma^2}\,\Delta_{S^{m-1}} (\Delta_{\sigma} f).
\end{aligned}
\end{equation}
For radial function $f(r, u)=f(r)$, we have $\Delta_{S^{m-1}}f=0$ and hence the formulas (\ref{2HF}) and (\ref{LMr}) follow.
\end{proof}

As a straightforward application we  obtain the following complete description of all radial biharmonic functions on the model space $(M^m_{\sigma}(o), g={\rm d}r^2+\sigma^2(r) g^{S^{m-1}})$.

\begin{theorem}\label{P1}
A radial function $f(r)$ on the model space \\$(\mathbb{R}^+\times S^{m-1}, {\rm d}r^2+\sigma^2(r)g^{S^{m-1}})$  is biharmonic if and only if
 \begin{align}\notag
 f(r)= &\; c_4+ c_3\int \frac{1}{\sigma^{m-1}}\,dr\\
 &+c_2\Big[\int \frac{1}{\sigma^{m-1}}\,dr\,\int \sigma^{m-1} dr-\int \left(\int \frac{1}{\sigma^{m-1}}\,dr\right)\sigma^{m-1}\,dr\Big]\\\notag
 &+c_1\Big[ \int \frac{1}{\sigma^{m-1}}\,dr\,\int \left(\int \frac{1}{\sigma^{m-1}}\,dr\right)\sigma^{m-1}\,dr-\int \left(\int \frac{1}{\sigma^{m-1}}\,dr\right)^2\sigma^{m-1}\,dr\Big].
 \end{align}
\end{theorem}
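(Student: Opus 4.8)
The plan is to exploit the factorization of the radial bi-Laplacian furnished by Lemma~\ref{MT1}. Writing $w(r):=\sigma^{m-1}(r)$, formula (\ref{2HF}) takes the self-adjoint form $\Delta_{\sigma} f=\tfrac1w\,(w f')'$, since $\tfrac1w(wf')'=f''+\tfrac{w'}{w}f''=f''+(m-1)\tfrac{\sigma'}{\sigma}f'$. Hence, putting $Lg:=\tfrac1w(wg')'$, a radial function is biharmonic precisely when $L(Lf)=0$, and the whole problem reduces to integrating the second-order ODE $Lg=0$, i.e. $(wg')'=0$, twice. Note that $L$ is a nondegenerate second-order linear operator on $(0,\infty)$ because $\sigma>0$, so $\ker L$ is two-dimensional and $\ker L^2$ — the space of radial biharmonic functions — is exactly four-dimensional; exhibiting four linearly independent solutions therefore settles both directions of the ``if and only if''.

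First I would solve $Lu=0$ for $u:=\Delta_{\sigma} f$. From $(wu')'=0$ one gets $wu'=\text{const}$, hence $u=a\int w^{-1}\,dr+b$; that is, $\Delta_{\sigma} f$ must be an arbitrary element of the plane of harmonic radial functions spanned by $1$ and $\int\sigma^{-(m-1)}\,dr$. Next I would solve the inhomogeneous equation $Lf=u$, i.e. $(wf')'=wu$, by two successive integrations: integrating once gives
\[
wf'=a\int\!\Big(w\!\int\! w^{-1}\,dr\Big)dr+b\int\! w\,dr+c,
\]
and dividing by $w$ and integrating once more yields
\[
f=a\int\! w^{-1}\Big[\int\!\big(w\!\int\! w^{-1}\,dr\big)dr\Big]dr+b\int\! w^{-1}\Big[\int\! w\,dr\Big]dr+c\int\! w^{-1}\,dr+d .
\]
The four functions multiplying $a,b,c,d$ are linearly independent (their Wronskian is nonzero, as is transparent from the iterated-integration construction), so by the dimension count above this is the general radial biharmonic function.

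Finally I would put the two genuinely biharmonic summands into the exact shape stated in the theorem by integration by parts. With $P:=\int\sigma^{-(m-1)}\,dr$ (so $P'=\sigma^{-(m-1)}$) one has $\int w^{-1}[\int w\,dr]\,dr=\int P'\big(\int w\,dr\big)dr=P\int w\,dr-\int wP\,dr$, which is precisely the $c_2$-bracket; and, with $K:=\int wP\,dr$, $K'=wP$, one gets $\int w^{-1}[\int(wP)\,dr]\,dr=\int P'K\,dr=PK-\int wP^{2}\,dr=P\int wP\,dr-\int wP^{2}\,dr$, the $c_1$-bracket. Relabelling $a,b,c,d$ as $c_1,c_2,c_3,c_4$ then gives the claimed formula.

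The only real work beyond this routine ``integrate a second-order ODE twice'' scheme is bookkeeping: each indefinite integral hides an arbitrary additive constant, and one must check these constants propagate only into lower-order terms already present — for instance, adding a constant to $\int\sigma^{-(m-1)}\,dr$ changes the $c_1$- and $c_2$-brackets by multiples of $\int\sigma^{-(m-1)}\,dr$ and $1$ — so that they are harmlessly absorbed into $c_3$ and $c_4$ and the four-parameter family is genuinely the one displayed. Verifying that this absorption is consistent, together with confirming the integration-by-parts rearrangement matches the stated brackets verbatim, is the main (though entirely elementary) obstacle.
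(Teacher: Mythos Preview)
Your proof is correct and follows essentially the same strategy as the paper: both reduce biharmonicity of $f(r)$ to the statement that $\Delta_\sigma f$ lies in the two-dimensional space of radial harmonic functions, and then solve the resulting inhomogeneous second-order ODE for $f$. The only difference is cosmetic --- the paper quotes the variation-of-parameters formula (with Wronskian $y_1y_2'-y_1'y_2=\sigma^{-(m-1)}$) to obtain the $c_1$- and $c_2$-brackets directly, whereas you put the operator in divergence form $Lg=\tfrac{1}{w}(wg')'$, integrate twice, and then recover those same brackets via integration by parts; your route is arguably a bit more transparent, and your explicit dimension count ($\dim\ker L^2=4$) makes the ``only if'' direction cleaner than in the paper.
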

\begin{proof}
By  (\ref{LMr}), a radial  function $f(r, u)=f(r)$ is biharmonic on  $M^m_{\sigma}(o)$ if and only if $y=\Delta_{\sigma} f$ is a solution of the second order ordinary homogeneous differential equation
\begin{equation}\label{S1}
y''+(m-1)\frac{\sigma'}{\sigma}\, y'=0.
\end{equation}
 It is easy to check that the general solutions of (\ref{S1})  are 
\begin{align}\label{S10}
y=c_1y_1+c_2y_2,\;\; with\; y_1=\int\frac{1}{\sigma^{m-1}} dr, \;\;\;\;\;y_2=-1.
\end{align}
Thus, the biharmonicity of $f(r, u)=f(r)$ leads to the inhomogeneous 2nd order ODE
\begin{align}\label{S11}
\Delta f=c_1y_1+c_2y_2,\;\; with\; y_1=\int\frac{1}{\sigma^{m-1}} dr, \;\;\;\;\;y_2=-1,\; or\\
f''+(m-1)\frac{\sigma'}{\sigma}\, f'=c_1y_1+c_2y_2,\;\; with\; y_1=\int\frac{1}{\sigma^{m-1}} dr, \;\;\;\;\;y_2=-1.
\end{align}
Note that the homogeneous part of this equation is exactly (\ref{S1}) whose general solution is known to be (\ref{S10}).
By the theory of the general solutions of linear ordinary differential equations, we conclude that the general solutions of  (\ref{S11}) can be written as
\begin{align}
f=A y_1+B y_2+y_{p_1}+y_{p_2},
\end{align}
where $y_{p_1}$ and $y_{p_2}$ are a particular solution of

\begin{align}
y''+(m-1)\frac{\sigma'}{\sigma}\, y'= &\,c_1\int\frac{1}{\sigma^{m-1}} dr, \;and\\
y''+(m-1)\frac{\sigma'}{\sigma}\, y'=&\, c_2,
\end{align}
 respectively.
 By using the method of variation of parameters (see e.g., \cite{ZC}, \S 4.6) we find that
 \begin{align}
 y_{p_1}= &\,\;c_1\left( y_1\,\int y_1\sigma^{m-1}\,dr-\int y_1^2\sigma^{m-1}\,dr\right)\\
  y_{p_2}= &\,\;c_2\left(y_1\,\int \sigma^{m-1} dr-\int y_1\sigma^{m-1}\,dr\right).\\
 \end{align}
 Therefore, a radial  function $f(r)$ is biharmonic function on the model space  $(M^m_{\sigma}(o)=(\mathbb{R}^+\times S^{m-1}, {\rm d}r^2+\sigma^2(r)g^{S^{m-1}})$ if and only if
 \begin{align}
 f(r)=c_3y_1+c_4+c_1\left( y_1\,\int y_1\sigma^{m-1}\,dr-\int y_1^2\sigma^{m-1}\,dr\right)\\
 +c_2\left(y_1\,\int \sigma^{m-1} dr-\int y_1\sigma^{m-1}\,dr\right),
 \end{align}
 where $y_1=\int \frac{1}{\sigma^{m-1}}\,dr$. From this we obtain the integral formula for biharmonic functions on the model space. 
 \end{proof}

Applying Theorem \ref{P1} with $\sigma(r)= r, \;\sin r, \sinh r$ respectively we obtain the following descriptions of radial biharmonic functions on space forms.
 
 \begin{proposition}\label{CB}
  (i)  On the Euclidean space  $(\r^m\setminus\{0\}, dr^2+r^2  g^{S^{m-1}})$,  a  radial functions $f(r)$ is a biharmonic if and only if
 \begin{align}\label{ER}
 &f(r) =\begin{cases}
 c_4+c_3 \ln r+c_2 r^2+c_1r^2 \ln r,\;\;\; m=2,\\
 c_4+c_3 r^{-2}+c_2 r^2+c_1\ln r,\;\;\;\; m=4,\; and\\
 c_4+c_3r^{2-m}+c_2r^2+c_1r^{4-m},\;\;\;m\ne 2, 4.
 \end{cases}
 \end{align}
 
 (ii) On the sphere  $(S^m, dr^2+\sin^2 r g^{S^{m-1}})$, a radial function $f(r)$ is a biharmonic if and only if
 \begin{align}\label{FS}
 &f(r) =c_4+c_3\,\int \frac{1}{\sin^{m-1}r}\,dr\\\notag
& \; +c_2\Big[\int \frac{1}{\sin^{m-1}r}\,dr\,\int \sin^{m-1}r dr-\int \left(\int \frac{1}{\sin^{m-1}r}\,dr\right)\sin^{m-1}r\,dr\Big]\\\notag
 &+c_1\Big[ \int \frac{1}{\sin^{m-1}r}\,dr\,\int \left(\int \frac{1}{\sin^{m-1}r}\,dr\right)\sin^{m-1}r\,dr-\int \left(\int \frac{1}{\sin^{m-1}r}\,dr\right)^2\sin^{m-1}r\,dr\Big].
 \end{align}
 
  (iii) On the hyperbolic space  $(H^m, dr^2+\sinh^2 r g^{S^{m-1}})$, a radial function $f(r)$ is a biharmonic if and only if
 \begin{align}\label{FH}
&f(r) =c_4+c_3\,\int \frac{1}{\sinh^{m-1}r}\,dr\\\notag
& \; +c_2\Big[\int \frac{1}{\sinh^{m-1}r}\,dr\,\int \sinh^{m-1}r dr-\int \left(\int \frac{1}{\sinh^{m-1}r}\,dr\right)\sinh^{m-1}r\,dr\Big]\\\notag
 &+c_1\Big[ \int \frac{1}{\sinh^{m-1}r}\,dr\,\int \left(\int \frac{1}{\sinh^{m-1}r}\,dr\right)\sinh^{m-1}r\,dr-\int \left(\int \frac{1}{\sinh^{m-1}r}\,dr\right)^2\sinh^{m-1}r\,dr\Big].
 \end{align}
 \end{proposition}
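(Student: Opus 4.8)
The plan is to apply Theorem~\ref{P1} directly with the three warping functions $\sigma(r)=r$, $\sigma(r)=\sin r$, and $\sigma(r)=\sinh r$, and then to simplify those of the resulting integrals that admit a closed form. Parts~(ii) and~(iii) require nothing beyond the substitution: written in geodesic polar coordinates about a point, the model metrics on $S^m$ and $H^m$ are exactly ${\rm d}r^2+\sin^2 r\,g^{S^{m-1}}$ and ${\rm d}r^2+\sinh^2 r\,g^{S^{m-1}}$, so inserting $\sigma=\sin r$ and $\sigma=\sinh r$ into the integral formula of Theorem~\ref{P1} reproduces (\ref{FS}) and (\ref{FH}) verbatim. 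It is worth remarking that these coordinates are singular at $r=0$ (and at $r=\pi$ on the sphere), which is why the statement concerns the punctured space forms; this has no effect on the formal computation.

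For part~(i) the substitution $\sigma(r)=r$ turns every integrand appearing in Theorem~\ref{P1} into a power of $r$, so all the integrals are elementary. First I would record $\int r^{m-1}\,{\rm d}r$ and $\int r^{1-m}\,{\rm d}r$; then, in this order, I would evaluate the antiderivative $y_1=\int\sigma^{1-m}\,{\rm d}r$, the $c_2$-bracket $y_1\int\sigma^{m-1}\,{\rm d}r-\int y_1\sigma^{m-1}\,{\rm d}r$, and finally the $c_1$-bracket $y_1\int y_1\sigma^{m-1}\,{\rm d}r-\int y_1^2\sigma^{m-1}\,{\rm d}r$. Throughout, I would freely discard any summand that is itself a solution of the homogeneous equation $f''+(m-1)\tfrac{\sigma'}{\sigma}f'=0$ — namely constants and $y_1$ — or a multiple of a lower $c_j$-solution, since such terms are absorbed into the four constants $c_1,\dots,c_4$. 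A short computation then shows that for $m\neq 2,4$ the four independent radial biharmonic functions reduce to $1$, $r^{2-m}$, $r^2$, $r^{4-m}$, which is the third line of (\ref{ER}).

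The one genuine subtlety — and the step I expect to demand the most care — is the exponent bookkeeping in the two cases where these powers collide. When $m=2$ one has $\int r^{1-m}\,{\rm d}r=\ln r$ rather than a power, and moreover $r^{4-m}$ coincides with $r^2$, so the $c_1$-bracket must be recomputed and is found to produce $r^2\ln r$, giving the first line of (\ref{ER}). When $m=4$ the power $r^{4-m}$ degenerates to a constant, but now $\int y_1^2\sigma^{m-1}\,{\rm d}r=\tfrac14\int r^{-1}\,{\rm d}r$ contributes a logarithm, so that $\ln r$ replaces $r^{4-m}$ and one obtains the second line of (\ref{ER}). In each exceptional case only the two or three integrals that have become $\int r^{-1}\,{\rm d}r$ or $\int r\ln r\,{\rm d}r$ need to be redone. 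Collecting the generic case together with the two exceptional ones yields the stated trichotomy and completes the proof.
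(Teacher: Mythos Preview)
Your proposal is correct and follows exactly the paper's approach: the paper's proof is a single sentence saying to apply Theorem~\ref{P1} with $\sigma(r)=r,\ \sin r,\ \sinh r$, and you do precisely this, supplying the elementary antiderivative computations for part~(i) (including the $m=2$ and $m=4$ degenerations) that the paper leaves implicit.
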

 
 Note that for $m=3$ we obtain the following examples which was obtained in \cite{Ca}
 \begin{example}
(I)  A radial function $f(r)$ on 3-sphere $(S^3, dr^2+\sin^2 r g^{S^{2}})$ is biharmonic if and only if
 \begin{align}\label{S30}
f(r)=a_1+a_2\,\cot r+a_3\, r+a_4\,r \cot r,
\end{align}
where $a_i$ are arbitrary constants.\\
(II)  A radial function $f(r)$ on hyperbolic space $(H^3, dr^2+\sinh^2 r g^{S^{2}})$ is  biharmonic if and only if
 \begin{align}\label{H30}
f(r)=a_1+a_2\,\coth r+a_3\, r+a_4\,r \coth r,
\end{align}
where $a_i$ are arbitrary constants.\\

In Fact, using Proposition \ref{CB} with $m=3$ we know that  a radial function $f(r)$ on $(S^3, dr^2+\sin^2 r g^{S^{2}})$ is biharmonic if and only if
 \begin{align}
 &f(r) =c_4+c_3 \int \frac{1}{\sin^{2}r}\,dr\,\\\notag
 & c_2\Big[\int \frac{1}{\sin^{2}r}\,dr\,\int \sin^{2}r dr-\int \left(\int \frac{1}{\sin^{2}r}\,dr\right)\sin^{2}r\,dr\Big]\\\notag
 &+c_1\Big[ \int \frac{1}{\sin^{2}r}\,dr\,\int \left(\int \frac{1}{\sin^{2}r}\,dr\right)\sin^{2}r\,dr-\int \left(\int \frac{1}{\sin^{2}r}\,dr\right)^2\sin^{2}r\,dr\Big].
 \end{align}
A straightforward computation gives
\begin{align}
f(r) &= c_4-c_3 \cot r-\frac{c_2}{2} r \cot r-\frac{c_1}{2}(r+\cot r)\\
&=c_4-(c_3+\frac{c_1}{2}) \cot r-\frac{c_2}{2} r \cot r-\frac{c_1}{2}r,
\end{align}
from which we obtained Statement (I). A similar proof gives Statement (II)
\end{example}

 \begin{remark}
(i) Note that in the family (\ref{S30})  $a_1+a_2\,\cot r$ are harmonic and $a_3\, r+a_4\,r \cot r$ are proper biharmonic on $S^3$. More precisely, $a_4\,r \cot r$ is a family of qualsi-harmonic functions (i.e., $\Delta^{S^3} (a_4 r \cot r)=constant\ne 0$) and the function $a_3 r$ is proper biharmonic. Similarly,  in the family (\ref{H30})  $a_1+a_2\,\coth r$ are harmonic and $a_3\, r+a_4\,r \coth r$ are proper biharmonic on $H^3$. Furthermore, $a_4\,r \coth r$ is qualsi-harmonic  and $a_3 \r$ is proper biharmonic.\\
(ii) An interesting result proved in \cite{Ca} states that the distance function $d(r) =r$ is proper biharmonic in a Riemannian manifold $(M^m, g)$ if and only if $m=1$, or $m=3$ and the manifold is a space form. Our example above and Proposition \ref{CB} (i) confirm  that the function $u(r) =r$ in 3-dimensional space form is indeed proper biharmonic.
 \end{remark}
 
\begin{corollary}
  (A) A radial function $f(r)$ on the sphere $(S^2, dr^2+\sin^2 r d\theta^2)$ is biharmonic if and only if
  \begin{align}\label{q1}
 &f(r) =a_1+a_2\,\ln \tan \frac{r}{2} +a_3\,\ln \sin r \\\notag
 &+a_4 \Big[ \ln\sin r\ln\tan\, \frac{r}{2}-2\int \cot r\,\ln\tan  \frac{r}{2}\,dr\Big], 
  \end{align}
where $a_i$ are arbitrary constants. In particular, $f(r)=\ln \sin r$ is a proper biharmonic function.\\
(B) A radial function $f(r)$ on the hyperbolic space $(H^2, dr^2+\sinh^2 r d\theta^2)$ is biharmonic if and only if
  \begin{align}
 &f(r) =a_1+a_2\,\ln \tanh \frac{r}{2} +a_3\,\ln \sinh r \\\notag
 &+a_4 \Big[ \ln\sinh r\ln\tanh\, \frac{r}{2}-2\int \coth r\,\ln\tanh  \frac{r}{2}\,dr\Big], 
  \end{align}
where $a_i$ are arbitrary constants. In particular, $f(r)=\ln \sin r$ is a proper biharmonic function.\\
\end{corollary}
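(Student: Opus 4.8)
The plan is to read off Corollaries~(A) and~(B) directly from Proposition~\ref{CB}(ii)--(iii) by putting $m=2$ and evaluating the resulting elementary integrals. For the sphere $S^2$ one has $\sigma(r)=\sin r$, so $\sigma^{m-1}=\sin r$, the innermost integral is $\int\frac{dr}{\sin r}=\ln\tan\frac r2$ (since $\frac{d}{dr}\ln\tan\frac r2=\csc r$), and $\int\sin r\,dr=-\cos r$. These are the only two building-block integrals occurring in (\ref{FS}) when $m=2$.

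First I would compute $\int\big(\ln\tan\frac r2\big)\sin r\,dr$ by parts with $v=-\cos r$, obtaining $-\cos r\,\ln\tan\frac r2+\int\cot r\,dr=-\cos r\,\ln\tan\frac r2+\ln\sin r$. Substituting this into the $c_2$-bracket of (\ref{FS}), the two $\cos r\,\ln\tan\frac r2$ terms cancel and the bracket collapses to $-\ln\sin r$. Next I would compute $\int\big(\ln\tan\frac r2\big)^2\sin r\,dr$ by parts in the same way, getting $-\cos r\,(\ln\tan\frac r2)^2+2\int\cot r\,\ln\tan\frac r2\,dr$; feeding this and the previous integral into the $c_1$-bracket of (\ref{FS}) makes the $\cos r\,(\ln\tan\frac r2)^2$ terms cancel, leaving $\ln\sin r\,\ln\tan\frac r2-2\int\cot r\,\ln\tan\frac r2\,dr$. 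Collecting the four surviving pieces and relabelling $(c_4,c_3,-c_2,c_1)$ as $(a_1,a_2,a_3,a_4)$ gives exactly formula (\ref{q1}).

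For the ``in particular'' claim I would take $a_1=a_2=a_4=0$, $a_3=1$, and use (\ref{2HF}) with $m=2$: $\Delta_{S^2}(\ln\sin r)=(\ln\sin r)''+\cot r\,(\ln\sin r)'=-\csc^2 r+\cot^2 r=-1$, a nonzero constant, so $\Delta^2_{S^2}(\ln\sin r)=\Delta_{S^2}(-1)=0$ and $\ln\sin r$ is biharmonic but not harmonic, i.e.\ proper biharmonic. Part~(B) is handled identically with $\sin$ replaced by $\sinh$ throughout: $\int\frac{dr}{\sinh r}=\ln\tanh\frac r2$, $\int\sinh r\,dr=\cosh r$, the same two integrations by parts produce the same cancellations in the $c_2$- and $c_1$-brackets of (\ref{FH}), and $\Delta_{H^2}(\ln\sinh r)=-\frac{1}{\sinh^2 r}+\coth^2 r=1\neq0$ shows $\ln\sinh r$ is proper biharmonic.

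I do not expect a genuine obstacle here. The only delicate point is bookkeeping: tracking the signs through the two cancellations so that the non-elementary terms $\cos r\,\ln\tan\frac r2$ and $\cos r\,(\ln\tan\frac r2)^2$ really drop out, leaving only the single residual integral $\int\cot r\,\ln\tan\frac r2\,dr$, and correctly identifying which constant $c_i$ is absorbed---and with which sign---into which $a_j$.
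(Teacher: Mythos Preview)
Your proposal is correct and follows essentially the same route as the paper: specialize Proposition~\ref{CB}(ii)--(iii) to $m=2$, evaluate the elementary integrals (your integrations by parts are exactly the ``straightforward calculation'' the paper alludes to), and relabel the constants. Your argument is in fact slightly cleaner---your choice of antiderivatives avoids the stray $\ln 2$ terms that appear in the paper before relabeling---and you also supply the explicit check that $\Delta(\ln\sin r)=-1$ (resp.\ $\Delta(\ln\sinh r)=1$), which the paper omits.
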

 \begin{proof} Applying formula (\ref{FS}) with $m=2$ we have
 \begin{align}
 &f(r) =c_4+c_3\,\int \frac{1}{\sin r}\,dr\\\notag
& \; +c_2\Big[\int \frac{1}{\sin r}\,dr\,\int \sin r dr-\int \left(\int \frac{1}{\sin r}\,dr\right)\sin r\,dr\Big]\\\notag
 &+c_1\Big[ \int \frac{1}{\sin r}\,dr\,\int \left(\int \frac{1}{\sin r}\,dr\right)\sin r\,dr-\int \left(\int \frac{1}{\sin r}\,dr\right)^2\sin r\,dr\Big].
 \end{align}
 A straightforward calculation yields
  \begin{align}
 &f(r) =(c_4+c_2\ln 2)+(c_3-c_1\ln 2)\,\ln \tan \frac{r}{2} -c_2\,\ln \sin r \\\notag
 &+c_1\Big[ \ln\sin r\ln\tan\, \frac{r}{2}-2 \int \cot r\,\ln\tan  \frac{r}{2}\,dr\Big].
 \end{align}
 Relabeling the arbitrary constants we obtain (\ref{q1}). A similar proof gives Statement (B).
\end{proof}

{\bf Radial biharmonic functions on space forms with conformally flat models:} One can easily check that by performing a coordinate change $r= 2\tan^{-1} t$ the warped product
 model for a sphere  $ (S^m, dr^2+\sin^2 r g^{S^{m-1}})$  can be written as the  conformally flat model $ \left(S^m, \frac{4(dt^2+t^2g^{S^{m-1}})}{(1+t^2)^2}\right)$. Similarly, the hyperbolic space $ (H^m, dr^2+\sinh^2 r g^{S^{m-1}})$ as can be written as $ \left(H^m, \frac{4(dt^2+t^2g^{S^{m-1}})}{(1-t^2)^2}\right)$. 
 
Using these models and the transformations $r= 2\tan^{-1} t$ and $r= 2\tanh^{-1} t$ respectively,  Proposition \ref{CB}  reads

\begin{proposition}\label{St}
(i) A radial function $u: \left(S^m, \frac{4(dt^2+t^2g_{S^{m-1}})}{(1+t^2)^2}\right)\to \r, u=u(t)$ on the sphere is biharmonic if and only if
\begin{align}\label{2S}
u(t)= \; & c_1+c_2 u_2+c_3\Big[u_2\int\frac{t^{m-1}}{(1+t^2)^m} dt-\int \frac{u_2\,t^{m-1}}{(1+t^2)^m} dt\Big]\\\notag
&\,+c_4\Big[u_2\int\frac{u_2\, t^{m-1}}{(1+t^2)^m} dr-\int \frac{u_2^2\,t^{m-1}}{(1+t^2)^m} dt\Big],
\end{align}
where $u_2=\int \frac{(1+t^2)^{m-2}}{t^{m-1}}\,dt$.\\

(ii) A radial function $u: \left(H^m, \frac{4(dt^2+t^2g_{S^{m-1}})}{(1-t^2)^2}\right)\to \r, u=u(t)$ on the hyperbolic space  is biharmonic if and only if
\begin{align}\label{2H}
u(t)= \; & c_1+c_2 u_2+c_3\Big[u_2\int\frac{t^{m-1}}{(1-t^2)^m} dr-\int \frac{u_2\,t^{m-1}}{(1-t^2)^m} dt\Big]\\\notag
&\,+c_4\Big[u_2\int\frac{u_2\, t^{m-1}}{(1-t^2)^m} dt-\int \frac{u_2^2\,t^{m-1}}{(1-t^2)^m} dr\Big],
\end{align}
where $u_2=\int \frac{(1-t^2)^{m-2}}{t^{m-1}}\,dr$.
\end{proposition}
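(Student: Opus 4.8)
The plan is to derive both formulas directly from Proposition~\ref{CB} (equivalently, Theorem~\ref{P1}) by a change of the radial coordinate, exploiting the fact that biharmonicity is an isometry-invariant notion. Since the substitution $r = 2\tan^{-1} t$ carries the warped-product metric $dr^2 + \sin^2 r\, g^{S^{m-1}}$ to the conformally flat metric $\frac{4}{(1+t^2)^2}(dt^2 + t^2 g^{S^{m-1}})$ (the routine check already indicated above), a radial function $f(r)$ is biharmonic on the warped-product model of $S^m$ if and only if the corresponding function $u(t) = f(2\tan^{-1} t)$ is biharmonic on the conformally flat model. Hence it suffices to substitute $r = 2\tan^{-1} t$ into the integral formula~(\ref{FS}), and $r = 2\tanh^{-1} t$ into~(\ref{FH}), and then simplify the resulting integrals.

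First I would record the elementary identities behind the substitution. For the spherical case, $r = 2\tan^{-1} t$ gives $dr = \frac{2}{1+t^2}\,dt$ and $\sin r = \sin(2\tan^{-1} t) = \frac{2t}{1+t^2}$, so that $\frac{1}{\sin^{m-1} r}\,dr = \frac{1}{2^{m-2}}\,\frac{(1+t^2)^{m-2}}{t^{m-1}}\,dt$ and $\sin^{m-1} r\,dr = \frac{2^m\, t^{m-1}}{(1+t^2)^m}\,dt$. For the hyperbolic case, $r = 2\tanh^{-1} t$ gives $dr = \frac{2}{1-t^2}\,dt$ and $\sinh r = \frac{2t}{1-t^2}$, producing the analogous expressions with $(1+t^2)$ replaced throughout by $(1-t^2)$.

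Substituting these into~(\ref{FS}), the innermost integral $\int \frac{1}{\sin^{m-1} r}\,dr$ becomes $\frac{1}{2^{m-2}}\,u_2$ with $u_2 = \int \frac{(1+t^2)^{m-2}}{t^{m-1}}\,dt$, and each weighted integral $\int(\,\cdots\,)\sin^{m-1} r\,dr$ becomes $\int(\,\cdots\,)\frac{2^m\, t^{m-1}}{(1+t^2)^m}\,dt$; iterating this through the three nested levels of~(\ref{FS}) produces exactly the bracketed expressions in~(\ref{2S}) up to overall powers of $2$. Since these numerical factors, together with the additive constants of integration, can all be absorbed into the four arbitrary constants, relabelling yields~(\ref{2S}); the hyperbolic identity~(\ref{2H}) follows identically from~(\ref{FH}).

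The step requiring the most care is the bookkeeping: one must check that the nesting pattern of the triple integrals in~(\ref{FS}) is transported verbatim under the substitution — which holds because a monotone change of the integration variable commutes with composition of the operators $\int(\cdot)\,dr$ — and that every stray scalar $2^k$ and every constant of integration genuinely reduces to a redefinition of $c_1,\dots,c_4$. No analytic input beyond Theorem~\ref{P1} and the two elementary substitution identities is needed; alternatively, one could bypass Proposition~\ref{CB} and apply the conformal-change formula for $\Delta$ directly to $\frac{4}{(1\pm t^2)^2}(dt^2+t^2g^{S^{m-1}})$, but the route through the warped-product model is shorter.
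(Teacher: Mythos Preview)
Your proposal is correct and follows exactly the same approach as the paper: the paper derives Proposition~\ref{St} simply by performing the coordinate changes $r = 2\tan^{-1} t$ and $r = 2\tanh^{-1} t$ in Proposition~\ref{CB}, as stated in the sentence immediately preceding the proposition. Your write-up is in fact more detailed than the paper's, which does not spell out the substitution identities or the bookkeeping with the powers of $2$.
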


A straightforward checking using  (\ref{2S}) and (\ref{2H}) with $m=2$ respectively we recover the following radial biharmonic functions on $S^2$ and $H^2$ which was obtained in \cite{Le}. 
\begin{corollary}(see also \cite{Le})
(i) A radial function $u: \left(S^2, \frac{4(dt^2+t^2d\theta^2)}{(1+t^2)^2}\right)\to \r$ with $u=u(t)$ is biharmonic if and only if
\begin{align}\label{2S2}
u(t)= \; & c_1+c_2 \ln t+c_3\ln (1+t^2)+c_4\Big[\ln t \ln (1+t^2)- 2\int \frac{\ln (1+t^2)}{t} dt\Big].
\end{align}
(ii) A radial function $u: \left(H^2, \frac{4(dt^2+t^2d\theta^2)}{(1-t^2)^2}\right)\to \r$ with $u=u(t)$ is biharmonic if and only if
\begin{align}\label{2H2}
u(t)= \; & c_1+c_2 \ln t+c_3\ln (1-t^2)+c_4\Big[\ln t \ln (1-t^2)- 2\int \frac{\ln (1-t^2)}{t} dt\Big].
\end{align}
\end{corollary}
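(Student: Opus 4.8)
The plan is to specialize Proposition~\ref{St} to $m=2$ and evaluate the iterated integrals explicitly. First I would observe that for $m=2$ the auxiliary function reduces, in both the spherical and the hyperbolic case, to $u_2=\int \frac{(1\pm t^2)^{0}}{t}\,dt=\int \frac{dt}{t}=\ln t$, and the weight $\frac{t^{m-1}}{(1\pm t^2)^{m}}$ becomes $\frac{t}{(1\pm t^2)^{2}}$, with antiderivative $\mp\frac{1}{2(1\pm t^2)}$. Substituting $u_2=\ln t$ into (\ref{2S}) and (\ref{2H}) then leaves only three genuinely nontrivial primitives to compute.

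Next I would evaluate $\int \frac{t\ln t}{(1\pm t^2)^2}\,dt$ and $\int \frac{t(\ln t)^2}{(1\pm t^2)^2}\,dt$ by integration by parts, taking $v=\mp\frac{1}{2(1\pm t^2)}$, and repeatedly using the partial-fraction identity $\frac{1}{t(1\pm t^2)}=\frac{1}{t}\mp\frac{t}{1\pm t^2}$ together with the elementary antiderivatives $\int\frac{dt}{t}=\ln t$, $\int\frac{t\,dt}{1\pm t^2}=\pm\tfrac12\ln(1\pm t^2)$, and $\int\frac{\ln t}{t}\,dt=\tfrac12(\ln t)^2$. After one (resp. two) integrations by parts these reduce to linear combinations of $\ln t$, $(\ln t)^2$, $\ln(1\pm t^2)$, $\ln t\,\ln(1\pm t^2)$, $\frac{\ln t}{1\pm t^2}$, $\frac{(\ln t)^2}{1\pm t^2}$, and the single non-elementary primitive $\int\frac{\ln(1\pm t^2)}{t}\,dt$.

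Plugging these back into (\ref{2S}) (resp. (\ref{2H})) and expanding, the $c_3$-bracket collapses — up to an additive multiple of $\ln t$ and an additive constant — to a multiple of $\ln(1\pm t^2)$, while in the $c_4$-bracket the terms containing $\frac{1}{1\pm t^2}$ and $(\ln t)^2$ cancel in pairs, leaving a multiple of $\ln t\,\ln(1\pm t^2)-2\int\frac{\ln(1\pm t^2)}{t}\,dt$. Since the constant function $1$ and the function $\ln t$ are themselves among the four basic radial biharmonic functions (the $c_1$- and $c_2$-terms), any additive occurrence of them in the $c_3$- and $c_4$-pieces may be absorbed into $c_1$ and $c_2$; renaming the resulting arbitrary constants yields (\ref{2S2}) and (\ref{2H2}) verbatim. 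The hyperbolic statement is obtained from the spherical one by the formal substitution $1+t^2\mapsto 1-t^2$, tracking the accompanying sign changes, so it needs no separate argument.

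The computation is entirely routine; the only point requiring a little care is the bookkeeping of which terms get absorbed into the span of the homogeneous solutions $\{1,\ln t\}$ — in particular, verifying that the coefficient of $\frac{1}{1\pm t^2}$ genuinely vanishes in the final $c_4$-bracket, so that the stated four-parameter family is complete and no extra independent solution has been dropped or introduced.
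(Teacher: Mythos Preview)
Your proposal is correct and follows exactly the approach the paper takes: the paper simply states that ``a straightforward checking using (\ref{2S}) and (\ref{2H}) with $m=2$'' recovers the formulas, and what you outline is precisely that straightforward checking, carried out in detail. Your bookkeeping is sound---in particular the $\frac{1}{1\pm t^2}$ and $(\ln t)^2$ terms do cancel in the $c_4$-bracket as you claim, and the residual $\ln t$ term in the $c_3$-bracket is correctly absorbed into $c_2$.
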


\begin{example}
The function $u(t)=\arctan t $ is a proper biharmonic function on  $\left(S^3, \frac{4(dt^2+t^2g^{S^2})}{(1+t^2)^2}\right)\to \r$. Note that in this model, one can easily described the function by using the Cartesian coordinates from $\r^4$ in which $S^3$  lies. In fact,  this function can be  viewed as $\r^4\supset S^3\to \r$ and  $u (x)=\arctan \sqrt{\frac{1+x_4}{1-x_4}}$  for any $(x_1, x_2, x_3, x_4)\in S^3\setminus\{N\}$.
\end{example}

Note that radial quasi-harmonic functions on Poincar\'e  n-ball \\ $B^n_{\alpha}=(D^n, (1-|x|^2)^{2\alpha}g_0)$ were studied in \cite{SW1} and \cite{SW7}, where the authors proved that there exist bounded quasi-harmonic functions on $B^n_{\alpha}$ if and only if $\alpha\in (-1, \frac{1}{n-2})$.\\

\section{Biharmonic functions on model spaces via spherical harmonic functions}

In their study of the generalized Almansi property the authors in \cite{MR} explored the possibility of constructing biharmonic functions on the model space\\ $M^m_{\sigma}(o)=(\mathbb{R}^+\times S^{m-1}, {\rm d}r^2+\sigma^2(r)g^{S^{m-1}})$ via the products of a radial function and a local harmonic function. It turns out that the only model space that has this property is the Euclidean space. 

In this section, we will prove that the model space \M always admits proper biharmonic functions $f(r, \theta)=u(r)v_k(\theta)$  as a product of any eigenfunction on the factor sphere $S^{m-1}$ and certain radial functions.

 This provides an effective way to construct infinitely many proper biharmonic functions on the model space including the three space forms.
 
\begin{theorem}\label{MT2}
For any eigenfunction $v_k:S^{m-1}\to \r$ there exist radial functions $u(r)$ such that the product $p(r,\theta)=u(r)v_k(\theta)$ is a  biharmonic functions on the model space  $M^m_{\sigma}(o)=(\mathbb{R}^+\times S^{m-1}, {\rm d}r^2+\sigma^2(r)g^{S^{m-1}})$. In particular, there are infinitely many proper biharmonic functions on space forms given by the product of any  eigenfunctions with some radial functions.
\end{theorem}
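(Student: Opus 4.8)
The plan is to separate variables: for a fixed eigenfunction $v_k$ of $S^{m-1}$, with $\Delta_{S^{m-1}}v_k=-\lambda_k v_k$ and $\lambda_k=k(m+k-2)$, I would reduce the biharmonicity of $p(r,\theta)=u(r)v_k(\theta)$ to a single fourth-order linear ODE for the radial factor $u$. Introduce the second-order radial operator
\begin{align}\notag
L_k u:=u''+(m-1)\frac{\sigma'}{\sigma}\,u'-\frac{\lambda_k}{\sigma^{2}}\,u,
\end{align}
whose coefficients are smooth on $(0,\infty)$ since $\sigma>0$ there. Substituting $f=u(r)v_k(\theta)$ into (\ref{LM}) of Lemma \ref{MT1} and using the eigenvalue equation gives immediately $\Delta_{\sigma}(u\,v_k)=(L_k u)\,v_k$. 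Because $L_k u$ is again radial and $v_k$ is the same eigenfunction, applying this identity twice yields $\Delta_{\sigma}^{2}(u\,v_k)=(L_k^{2}u)\,v_k$. Hence $p=u\,v_k$ is biharmonic on the model space if and only if $L_k^{2}u=0$, and it is \emph{proper} biharmonic precisely when moreover $L_k u\not\equiv0$ (here one uses $v_k\not\equiv 0$).

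Next I would solve $L_k^{2}u=0$ by factoring. Put $w:=L_k u$; then the equation becomes the pair $L_k w=0$ and $L_k u=w$. The homogeneous problem $L_k w=0$ is a second-order linear ODE with smooth coefficients on $(0,\infty)$, hence has a two-dimensional solution space spanned by a fundamental system $\{w_1,w_2\}$, and its general solution is $w=c_1w_1+c_2w_2$. Then $L_k u=c_1w_1+c_2w_2$ is a second-order linear inhomogeneous equation whose homogeneous part is $L_k w=0$, so — exactly as in the proof of Theorem \ref{P1}, by variation of parameters — one writes down an explicit particular solution $u_p$ as an iterated integral built from $w_1,w_2,\sigma$, and the general solution is $u=d_1w_1+d_2w_2+u_p$. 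Every such $u$ makes $p=u\,v_k$ biharmonic, which proves existence. Since $\dim\ker L_k=2<4=\dim\ker L_k^{2}$, the constants can be chosen so that $w=L_k u\not\equiv0$; then $\Delta_\sigma p=w\,v_k\not\equiv0$, so $p$ is proper biharmonic. This gives the first assertion.

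For the final "in particular" claim, I would specialise $\sigma$ to $r$, $\sin r$, $\sinh r$. For every $k\in\mathbb N$ the sphere $S^{m-1}$ carries nonconstant eigenfunctions $v_k$, the eigenvalues $\lambda_k$ are pairwise distinct, and the construction above produces, for each $k$, proper biharmonic functions $u(r)v_k(\theta)$ on the corresponding punctured space form; since functions coming from different eigenspaces are linearly independent, this yields infinitely many of them. In the flat case $L_k$ is an Euler equation with $w_1=r^{k}$, $w_2=r^{\,2-m-k}$, so the radial factors are elementary (for instance $u(r)=r^{k+2}$, for which $L_k u=2(2k+m)r^{k}\not\equiv0$), recovering the classical Almansi-type examples. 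The only real computation is the variation-of-parameters formula for $u_p$ in the second step; there is no genuine analytic obstruction, since all the relevant ODEs have smooth coefficients on $(0,\infty)$ and the domain $\mathbb R^+\times S^{m-1}$ excludes the origin, so no matching condition at $r=0$ is needed.
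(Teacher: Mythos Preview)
Your proposal is correct and follows essentially the same approach as the paper: separate variables via the eigenvalue equation to reduce $\Delta_\sigma(u\,v_k)=(L_ku)\,v_k$, solve the homogeneous second-order ODE $L_ku=0$ for a fundamental pair, and then obtain the biharmonic radial factors by variation of parameters applied to $L_ku=c_1u_1+c_2u_2$. Your write-up is in fact slightly more complete than the paper's, since you make the properness explicit via the dimension count $\dim\ker L_k=2<4=\dim\ker L_k^{2}$ and address the ``infinitely many'' claim through linear independence across distinct eigenspaces, points the paper leaves implicit.
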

\begin{proof}

Using polar coordinates $(r, \theta)\in \r^{+}\times S^{m-1}=\r^m\setminus\{0\}$ and (\ref{LM}), we have the Laplacian  on the function $f(r, \theta)=u(r)v_k(\theta)$ on  $M^m_{\sigma}(o)$ as
\begin{align}\label{LR}
\Delta f(r, \theta)=\big[ u'' +(m-1)\frac{\sigma'}{\sigma}\, u' \big] v_k(\theta)+ \frac{u}{\sigma^2}\,\Delta^{S^{m-1}} (v_k(\theta)).
\end{align}
Since $v_k(\theta)$ is a $\lambda_k$-eigenfunction on $S^{m-1}$ (i.e., a restriction of a harmonic homogeneous polynomial of degree $k$ in $\r^m$ to $S^{m-1}$), we have 
\begin{equation}
\Delta_{S^{m-1}} v_k(\theta)=-k(m+k-2) v_k(\theta)
\end{equation}
 and hence (\ref{LR}) becomes

\begin{align}\label{LR1}
\Delta f(r, \theta)=   \big[u'' +(m-1)\frac{\sigma'}{\sigma}\, u'- \frac{k(m+k-2)}{\sigma^2} u\big]v_k(\theta).
\end{align}
Thus, a function $f(r, \theta)=u(r)v_k(\theta)$ is harmonic on the model space $M^m_{\sigma}(o)$ if and only if the radial function $u(r)$ solves the 2nd order linear homogeneous equation
 
\begin{align}\label{MH}
u'' +(m-1)\frac{\sigma'}{\sigma}\, u'- \frac{k(m+k-2)}{\sigma^2} u=0.
\end{align}

By the well known existence theory of linear differential equations (see e.g., \cite{ZC}, Theorem 4.14), we have the general solution of (\ref{MH}) given by $u=c_1u_1+c_2 u_2$.

This means that  $f(r, \theta)=u(r)v_k(\theta)$ is harmonic if and only if $f(r, \theta)= (c_1u_1(r)+c_2 u_2(r))v_k (\theta)$. 

On the other hand, recall that $p(r, \theta)=u(r)v_k(\theta)$ is biharmonic on the model space if and only if $\Delta p(r, \theta)$ is  harmonic  on the model space. Looking for biharmonic function $p(r, \theta)=u(r)v_k(\theta)$ with $\Delta p(r, \theta)= (c_1u_1+c_2u_2)v_k(\theta)$ and using  (\ref{LR1}) we are lead to the following non-homogeneous  2nd order differential equation
\begin{align}\label{M2H}
u'' +(m-1)\frac{\sigma'}{\sigma}\, u'- \frac{k(m+k-2)}{\sigma^2} u=c_1u_1+c_2u_2.
\end{align}
 By the method of variation of parameters (see, e.g., \cite{ZC} \S 4.6) we have the general solution of  (\ref{M2H}
) given by
\begin{align}\label{GS}
u(r)=c_1u_1(r)+c_2u_2(r) + c_3u_{p_1}(r) +c_4(r)u_{p_2}(r),
\end{align}
where
\begin{align}\label{up1}
u_{p_1}(r)= &\; u_1\int\frac{-u_1u_2}{u_1u_2'-u_1'u_2}\,dr+u_2\int\frac{u_1^2}{u_1u_2'-u_1'u_2}\,dr,\;and\\\label{up2}
u_{p_2}(r)= &\; u_1\int\frac{-u_2^2}{u_1u_2'-u_1'u_2}\,dr+u_2\int\frac{u_1 u_2}{u_1u_2'-u_1'u_2}\,dr.
\end{align}

Summarizing the above, we conclude that  for any eigenfunction $v_k(\theta)$ on $S^{m-1}$ the product $p(r, \theta)= u(r)v_k (\theta)$ with $u(r)$ given by (\ref{GS}) is a biharmonic function on the model space  $(\mathbb{R}^+\times S^{m-1}, {\rm d}r^2+\sigma^2(r)g^{S^{m-1}})$. This completes the proof of the theorem.

\end{proof}
Theorem \ref{MT2}  (Formula (\ref{GS})) provides an effective method to construct biharmonic functions on model space including the three space forms.

\begin{corollary}\label{R2}
On the punctured Euclidean plane $(\r^2\setminus\{0\}, dr^2+ r^2 d\theta^2)$, each function in  the family 
\begin{align}\label{BF}
p(r,\theta)= ( c_3 + c_4 r^4) \;v_2(\theta)
\end{align}  
is a  proper biharmonic function. In particular, \\ $p(r,\theta)= a\, cos\, 2\theta +b\, \sin\, 2\theta=\frac{a(x^2-y^2)+b(xy)}{x^2+y^2}$ is a family of bounded  proper biharmonic functions on the punctured Euclidean plane.
\end{corollary}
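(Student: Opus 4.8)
The plan is to specialize Theorem \ref{MT2}, and in particular formula (\ref{GS}), to the punctured plane, where $\sigma(r)=r$, $m=2$, and $k=2$. First I would solve the homogeneous equation (\ref{MH}), which in this case is the Euler equation $u''+\tfrac1r u'-\tfrac{4}{r^2}u=0$; trying $u=r^{\alpha}$ gives $\alpha^2-4=0$, so a fundamental system is $u_1(r)=r^2$ and $u_2(r)=r^{-2}$. Multiplied by $v_2(\theta)$ these recover the two harmonic building blocks $r^2v_2(\theta)$ (the restriction of a harmonic homogeneous quadratic, e.g.\ $x^2-y^2$ or $xy$) and $r^{-2}v_2(\theta)$ (harmonic away from the origin), confirming the harmonic part of the general solution.

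Next I would compute the two particular solutions $u_{p_1},u_{p_2}$ from (\ref{up1})--(\ref{up2}). The denominator there is $u_1u_2'-u_1'u_2=-4/r$, so every integrand in (\ref{up1})--(\ref{up2}) is a monomial in $r$, and a short elementary integration gives $u_{p_1}(r)=\tfrac{1}{12}r^4$ and $u_{p_2}(r)=-\tfrac14$ (a constant). Substituting into (\ref{GS}) and relabelling constants, the general radial factor of a biharmonic product $p(r,\theta)=u(r)v_2(\theta)$ is $u(r)=a_1r^2+a_2r^{-2}+a_3r^4+a_4$. Discarding the harmonic summand $a_1r^2+a_2r^{-2}$, every member of the two-parameter family $u(r)=c_3+c_4r^4$ yields a biharmonic $p$, which is exactly (\ref{BF}).

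Then I would verify properness directly from (\ref{LR1}): with $u=c_3+c_4r^4$, $m=2$, $k=2$ it gives $\Delta p=\bigl(12c_4r^2-\tfrac{4c_3}{r^2}\bigr)v_2(\theta)$, which as a function is not identically zero whenever $(c_3,c_4)\neq(0,0)$, so $p$ is not harmonic; and since $12c_4r^2-\tfrac{4c_3}{r^2}=12c_4u_1-4c_3u_2$ is a combination of the two homogeneous solutions, a second application of (\ref{LR1}) gives $\Delta^2p=0$, so $p$ is proper biharmonic. For the final assertion I would set $c_4=0$ and $v_2(\theta)=a\cos2\theta+b\sin2\theta$; using $x=r\cos\theta$, $y=r\sin\theta$ one has $\cos2\theta=\dfrac{x^2-y^2}{x^2+y^2}$ and $\sin2\theta=\dfrac{2xy}{x^2+y^2}$, so $p$ is (up to renaming $b$) the stated rational function, and it is bounded because $|\cos2\theta|,|\sin2\theta|\le1$.

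There is no genuine obstacle here: the statement is a direct specialization of Theorem \ref{MT2}, and the only work is the elementary integration in (\ref{up1})--(\ref{up2}) together with the bookkeeping that separates the harmonic summands $r^{\pm2}v_2(\theta)$ from the genuinely biharmonic ones $r^4v_2(\theta)$ and $v_2(\theta)$. The one point I would state carefully is precisely which members of the family are \emph{proper} — all but the zero function, since ``proper biharmonic'' excludes harmonic functions, and one should note that $\Delta p\not\equiv0$ even when exactly one of $c_3,c_4$ vanishes.
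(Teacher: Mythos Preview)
Your proposal is correct and follows essentially the same route as the paper: specialize Theorem \ref{MT2} with $m=2$, $k=2$, $\sigma(r)=r$, solve the Euler equation to get $u_1=r^2$, $u_2=r^{-2}$, and compute $u_{p_1}=\tfrac{1}{12}r^4$, $u_{p_2}=-\tfrac14$ via (\ref{up1})--(\ref{up2}). Your additional explicit check of properness via (\ref{LR1}) and your careful remark about the $(c_3,c_4)\neq(0,0)$ condition are welcome details that the paper's proof omits.
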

\begin{proof}
Using Theorem \ref{MT2}  with $m=2, k=2,  \sigma(r)= r$ Equation (\ref{MH}) becomes
\begin{align}\label{S20}
u'' +\frac{1}{r}\, u'-\frac{4}{r^2} u=0
\end{align}
which has the fundamental set of solutions $u_1(r)=r^2, \;u_2(r)=r^{-2}$. Using formulas (\ref{up1}) and (\ref{up2}) we have
\begin{align}\notag
u_{p_1}(r)=  \frac{r^4}{12}, \;\;u_{p_2}(r)=  -\frac{1}{4}.
\end{align}
Thus, we obtain the family (\ref{BF}) of proper biharmonic functions. Recalling that $v_2(\theta)$ is the restriction of a harmonic homogeneous polynomial of degree $2$, so we have
\begin{align}
v_2(\theta)=[a(x^2-y^2)+b\, xy]|_{S^1}=\frac{a(x^2-y^2)+b(xy)}{x^2+y^2}=a\, cos\, 2\theta +b\, \sin\, 2\theta,
\end{align}
which is clearly a bounded  function on the puncture plane $\r^2\setminus\{0\}$.
\end{proof}

\begin{remark}
(i) The interesting problem of classifying all bounded biharmonic functions on a punctured Euclidean space was studied in \cite{SW0}, where the authors obtained the  following nice classification.\\
{\bf Theorem} (\cite{SW0}): The linear space $BH^2(\r^m\setminus\{0\})$ of bounded biharmonic functions on $\r^m\setminus\{0\}$ is 
\begin{align}
BH^2(\r^m\setminus\{0\})=\begin{cases} span \{ 1, \cos 2\theta, \sin 2\theta\},\hskip2.5cm for\; m=2;\\
span \{ 1, \cos r\cos \theta, \sin r \sin \theta, \cos r\}\;\;\; for\; m=3;\\
Span \{ 1 \}, \hskip4.8cm for \;m\ge 4.
\end{cases}
\end{align}
(ii) Our Corollary \ref{R2} does confirm that the functions $\cos 2\theta, \sin 2\theta $ are indeed bounded proper biharmonic on the punctured plane.
 \end{remark}

\begin{example}
On the sphere $(S^2\setminus\{\pm N\}, dr^2+ \sin^2r d\theta^2)$, each function in  the family 
\begin{align}\notag
p(r,\theta)= \big[\;& c_3(-\sin^2\frac{r}{2}(\cot \frac{r}{2}+\tan \frac{r}{2})+2 \tan \frac{r}{2}\ln \sin \frac{r}{2})\\
\; +&\; c_4(\sin^2\frac{r}{2}(\cot \frac{r}{2}+ \tan \frac{r}{2})+2 \cot \frac{r}{2}\ln \cos \frac{r}{2})\big] \;v_1(\theta)
\end{align}  
is a  proper biharmonic function. In particular, \\$p(r,\theta)= (2 \tan \frac{r}{2}\ln \sin \frac{r}{2}
\; +2 \cot \frac{r}{2}\ln \cos \frac{r}{2}) v_1(\theta)$ is a proper biharmonic function on 2-sphere.

In fact,  using Theorem \ref{MT2}  with $m=2, k=1, \sigma(r)= \sin r$, Equation (\ref{MH}) becomes 
\begin{align}\label{S20}
u'' +\cot r\, u'-\csc^2r u=0
\end{align}
which has the general solution $u(r)=c_1u_1(r)+c_2u_2(r)$ for $u_1=\cot \frac{r}{2}, u_2=\tan\frac{r}{2}$. Using formulas (\ref{up1}) and (\ref{up2}) we have
\begin{align}\notag
u_{p_1}(r)= &\;\;\;\;\; \frac{1}{2}\cot \frac{r}{2} -\sin^2\frac{r}{2}(\cot \frac{r}{2}+\tan \frac{r}{2})+2 \tan \frac{r}{2}\ln \sin \frac{r}{2}, \\\notag
u_{p_2}(r)= &\; -\frac{1}{2}\tan \frac{r}{2} +\sin^2\frac{r}{2}(\cot \frac{r}{2}+ \tan \frac{r}{2})+2 \cot \frac{r}{2}\ln \cos \frac{r}{2}.
\end{align}
Noting that the first term in both $u_{p_1}(r)$ and $u_{p_2}(r)$ above are  harmonic, by discarding them from the families we obtain the claimed family of proper biharmonic functions. The particular one is obtained by choosing $c_3=c_4=1$.
\end{example}

Similarly, by using Theorem \ref{MT2}  with $m=2, k=1,  \sigma(r)= \sinh r$ we have 
\begin{example}
On the hyperbolic plane $(H^2, dr^2+ \sinh^2r d\theta^2)$, each function in  the family 
\begin{align}\notag
p(r,\theta)= \big[\;& c_3(-\sinh^2\frac{r}{2}(\coth \frac{r}{2}-\tanh \frac{r}{2})+2 \tanh \frac{r}{2}\ln \sinh \frac{r}{2})\\
\; +&\; c_4(-\sinh^2\frac{r}{2}(\coth \frac{r}{2}- \tanh \frac{r}{2})+2 \coth \frac{r}{2}\ln \cosh \frac{r}{2})\big] \;v_1(\theta)
\end{align}  
is a  proper biharmonic function. In particular, \\$p(r,\theta)= (2 \tanh \frac{r}{2}\ln \sinh \frac{r}{2}
\; -2 \coth \frac{r}{2}\ln \cosh \frac{r}{2}) v_1(\theta)$ is a proper biharmonic function on the hyperbolic plane $H^2$.\\
\end{example}

\begin{example}
On the sphere $(S^2\setminus\{\pm N\}, dr^2+ \sin^2r d\theta^2)$, each function in  the family 
\begin{align}\notag
p(r,\theta)= \big[\;& c_3(1+\cos^2 \frac{r}{2} -\sin^2\frac{r}{2}\tan^2 \frac{r}{2}+4 \tan^2 \frac{r}{2}\ln \sin \frac{r}{2})\\
\; +&\; c_4( \cos^2 \frac{r}{2} -\sin^2\frac{r}{2}\tan^2 \frac{r}{2}+2 \cot^2 \frac{r}{2}\ln \cos \frac{r}{2})\big] \;v_2(\theta)
\end{align}  
is a  proper biharmonic function. In particular, \\$p(r,\theta)= (1+4 \tan^2 \frac{r}{2}\ln \sin \frac{r}{2}
\; -2 \cot^2 \frac{r}{2}\ln \cos \frac{r}{2}) v_2(\theta)$ is a proper biharmonic function on 2-sphere.

In fact,  with $m=k=2, \sigma(r)=\sin r$ Equation (\ref{MH}) becomes
\begin{align}\label{S20}
u'' +\cot r\, u'-4 \csc^2r u=0
\end{align}
which has a set of fundamental solutions  $u_1=\cot^2 \frac{r}{2}, u_2=\tan^2\frac{r}{2}$. Using formulas (\ref{up1}) and (\ref{up2}) and discarding the harmonic parts we have
\begin{align}\notag
u_{p_1}(r)= &\;\;\;\;\; 1+\cos^2 \frac{r}{2} -\sin^2\frac{r}{2}\tan^2 \frac{r}{2}+4 \tan^2 \frac{r}{2}\ln \sin \frac{r}{2}, \\\notag
u_{p_2}(r)= &\hskip1.2cm  \cos^2 \frac{r}{2} -\sin^2\frac{r}{2}\tan^2 \frac{r}{2}+2 \cot^2 \frac{r}{2}\ln \cos \frac{r}{2},
\end{align}
which gives the claimed family of proper biharmonic functions on $S^2$. The particular one is obtained by choosing $c_3=-c_4=1$.
\end{example}

\section{ Biharmonic functions on punctured Euclidean spaces}

For harmonic functions on a Euclidean space we have the following well known Liouville type theorems.\\
\indent {\bf Liouville Theorem}: Any bounded harmonic function on $\r^m$  is constant. \\
\indent {\bf Generalization 1}: Any positive harmonic function on $\r^m$  is constant. \\
\indent {\bf Generalization 2} (see e.g., \cite{ABR}, Corollaries 3.3, 3.14): Any positive harmonic function on $u:\r^m\setminus\{0\}\to \r$ takes the form
\begin{align}\label{PH}
u(x)=\begin{cases}
constant,\;\; m=2\\
a+b|x|^{-(m-2)},\;for\; a, b > 0,\;\;\;m>2.
\end{cases}
\end{align}

For biharmonic functions on a Euclidean space we have\\
\indent {\bf Liouville Theorem} (\cite{Hu}): Any bounded biharmonic function on $\r^m$  is constant. \\
\indent {\bf Fact}: There are positive biharmonic function on $\r^m$ which is NOT constant,\\
\indent  e.g., $u(x)=1+|x|^2$. \\
\indent For bounded biharmonic functions on the punctured Euclidean space we have\\
{\bf Theorem} (\cite{SW0}): The linear space $BH^2(\r^m\setminus\{0\})$ of bounded biharmonic functions on $\r^m\setminus\{0\}$ is 
\begin{align}\notag
BH^2(\r^m\setminus\{0\})=\begin{cases} span \{ 1, \cos 2\theta, \sin 2\theta\},\hskip2.5cm for\; m=2;\\
span \{ 1, \cos r\cos \theta, \sin r \sin \theta, \cos r\}\;\;\; for\; m=3;\\
Span \{ 1 \}, \hskip4.8cm for \;m\ge 4.
\end{cases}
\end{align}
In particular, any bounded biharmonic function on $\r^m\setminus\{0\}$ with $m\ge 4$ is constant.

Regarding biharmonic functions on $u:\r^m\setminus\{0\}\to \r$ we also have

{\bf Almansi Theorem} (1899): Any biharmonic function $u$ on $\r^m\setminus\{0\}$ (or more generally, a star-shaped region) takes the form
\begin{equation}\label{Rm}
u=h_1+|x|^2 h_2,
\end{equation}
where $h_1, h_2$ are two harmonic functions.

A little more detailed description is the following

{\bf Lemma A (\cite{SWb}, Lemma 2.2):} Any biharmonic function $u$ on $\r^m\setminus\{0\}$ takes the form
\begin{equation}\label{Rm2}
u=\begin{cases}
h_1+|x|^2 h_2 + (ax_1+bx_2) \ln |x|, \hskip1cm for\; m=2;\\
h_1+|x|^2 h_2 + a \ln |x|, \hskip2.8cm for\; m=4;\\
h_1+|x|^2 h_2,\hskip4.4cm for\; m\ne 2,4.\\
\end{cases}
\end{equation}
where $a, b$ are arbitrary constants and $h_1, h_2$ are two arbitrary harmonic functions.\\

Note that both formulas for biharmonic functions  given in (\ref{Rm}) and (\ref{Rm2}) depend on two arbitrary harmonic functions. To the author's knowledge, no further classification has been found in the literature. \\

Here, we gives  a complete classification of biharmonic functions with condition $\Delta u>0$ on the punctured Euclidean space.
\begin{theorem}\label{MT3}
A function $u:\r^m\setminus\{0\}\to \r$ is biharmonic with $\Delta u >0$ if and only if
\begin{align}\label{NC}
u(x)=\begin{cases}
c_1+c_2 \ln |x|+a |x|^2,\;\hskip3cm for\; m=2;\\
c_1+c_2  |x|^{-2} +a |x|^2+b\ln |x|,\;\hskip1.5cm for\; m=4;\\
c_1+c_2|x|^{-(m-2)}+ a |x|^2+b |x|^{-(m-4)},\; for\; m\ne 2, 4,
\end{cases}
\end{align}
where $c_i$ are arbitrary constants,  for $m= 2, 3, 4$, $a, b\ge 0, a^2+b^2\ne 0$, and  for $m\ne 2, 3, 4$, the constants $a  \ge 0, \;b\le 0, a^2+b^2\ne 0$.
\end{theorem}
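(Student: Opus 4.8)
The plan is to reduce the theorem to the classification of radial biharmonic functions on $\r^m\setminus\{0\}$ already proved in Proposition~\ref{CB}(i), and then to single out, among the functions in that list, the ones whose Laplacian is everywhere positive. With this in mind the ``if'' direction is purely computational: after relabelling constants, each $u$ in (\ref{NC}) is one of the functions appearing in (\ref{ER}) --- for $m=2$ it is the member of (\ref{ER}) whose coefficient of $r^2\ln r$ vanishes --- hence biharmonic; and applying the dimension-$m$ analogue of (\ref{X2}), namely $\Delta_{\r^m}|x|^\alpha=\alpha(\alpha+m-2)|x|^{\alpha-2}$, one computes, e.g., $\Delta u=2m\,a+2(4-m)\,b\,|x|^{2-m}$ for $m\neq 2,4$, and checks that the stated sign conditions make $\Delta u>0$ at every $x\neq 0$.

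For the ``only if'' direction the key first step is: if $u$ is biharmonic then $v:=\Delta u$ is harmonic on $\r^m\setminus\{0\}$, and by hypothesis $v>0$, so $v$ is a \emph{positive} harmonic function on the punctured space; Generalization~2, i.e. formula (\ref{PH}), then forces $v$ to be a positive constant when $m=2$ and $v=\alpha+\beta|x|^{2-m}$ with $\alpha,\beta\ge 0$, $\alpha^2+\beta^2\neq 0$, when $m\ge 3$. In particular $\Delta u$ is radial, so $u$ is a radial biharmonic function and Proposition~\ref{CB}(i) identifies it with the general member of (\ref{ER}). It remains only to compute $\Delta u$ for that general member and to impose $\Delta u>0$ on $(0,\infty)$.

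Carried out case by case: for $m\neq 2,4$ one gets $\Delta u=2m\,c_2+2(4-m)\,c_1\,r^{2-m}$, and since $r^{2-m}\to 0$ as $r\to\infty$ while $r^{2-m}\to+\infty$ as $r\to 0$, positivity on all of $(0,\infty)$ is equivalent to $c_2\ge 0$, $(4-m)\,c_1\ge 0$ and $(c_1,c_2)\neq(0,0)$ --- that is, $c_1\ge 0$ for $m=3$ and $c_1\le 0$ for $m\ge 5$. For $m=4$ one gets $\Delta u=8c_2+2c_1\,r^{-2}$, which is positive everywhere iff $c_1,c_2\ge 0$ and $(c_1,c_2)\neq(0,0)$. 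For $m=2$ the general radial biharmonic function is $c_4+c_3\ln r+c_2 r^2+c_1 r^2\ln r$, and a short computation gives $\Delta u=4(c_1+c_2)+4c_1\ln r$; as $\ln r$ is unbounded below and above on $(0,\infty)$, the condition $\Delta u>0$ forces $c_1=0$ and then $c_2>0$. Relabelling the remaining constants ($c_4\leftrightarrow c_1$, $c_3\leftrightarrow c_2$, $c_2\leftrightarrow a$, $c_1\leftrightarrow b$) turns each case into the corresponding line of (\ref{NC}).

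The one genuinely non-routine point I expect is the $m=2$ case: one has to notice that $r^2\ln r$ --- which is present in the general radial biharmonic function (\ref{ER}) but absent from (\ref{NC}) --- has Laplacian $4+4\ln r$, a function that changes sign on $(0,\infty)$, so that $\Delta u>0$ automatically removes it; this is exactly why the $m=2$ line of (\ref{NC}) carries one fewer free constant than the others. A secondary care is the bookkeeping of the sign of $c_1$ for $m\ge 3$, which flips with the sign of $4-m$ and is the reason $m=3$ is grouped with $m=2,4$ while $m\ge 5$ is not. (Alternatively one could start from Almansi's representation $u=h_1+|x|^2h_2$ together with Lemma~A and the logarithmic corrections in (\ref{Rm2}), but since the radiality of $\Delta u$ already restricts $u$ to the family (\ref{ER}), going through Proposition~\ref{CB}(i) is the most economical route.)
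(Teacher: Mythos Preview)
Your argument contains a genuine gap at the inference ``$\Delta u$ is radial, so $u$ is a radial biharmonic function.'' This implication is false: if $h$ is any non-radial harmonic function on $\r^m\setminus\{0\}$, then $u+h$ has the same Laplacian as $u$, is still biharmonic with $\Delta(u+h)>0$, but is not radial. Concretely, $u(x)=|x|^2+x_1$ on $\r^m\setminus\{0\}$ with $m\ge 3$ is biharmonic with $\Delta u=2m>0$, yet is not of the form (\ref{NC}); so Proposition~\ref{CB}(i) cannot be invoked directly on $u$ from the radiality of $\Delta u$ alone.

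The paper does not take this shortcut. It instead expands $u$ (via an approximating sequence $f_n$) in spherical harmonics, $u=\sum_{k,i} u_{ki}(r)v_{ki}(\theta)$, and matches $\Delta u$ against the radial right-hand side componentwise: each $k\ge 1$ piece is forced to satisfy the \emph{homogeneous} second-order ODE and hence to be harmonic, while only the $k=0$ piece solves the inhomogeneous ODE whose general solution is (\ref{NC}). Read carefully, that argument shows $u$ equals a function of the form (\ref{NC}) \emph{plus a harmonic function} on $\r^m\setminus\{0\}$, which is consistent with the counterexample above and suggests the statement is to be understood modulo harmonic functions. To repair your route through Proposition~\ref{CB}(i) you would need exactly this spherical-harmonic decomposition to strip off the non-radial harmonic part first; the radiality of $\Delta u$ by itself does not force $u$ into the family (\ref{ER}).
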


\begin{proof}
Note that $u:\r^m\setminus\{0\}\to \r$ is biharmonic with $\Delta u >0$ means that $\Delta u$  is a positive harmonic  function on $\r^m\setminus\{0\}$. So, by (\ref{PH}), we have
\begin{align}\label{Z}
\Delta u=\begin{cases}
C,\;\; m=2\\
a+b|x|^{-(m-2)},\;for\; a, b > 0,\;\;\;m>2.
\end{cases}
\end{align}
Using polar coordinates $x=(r, \theta)\in \r^+\times S^{m-1}\equiv \r^{m}\setminus\{0\}$ with $|x|=r, \theta=\frac{x}{|x|}\in S^{m-1}$. Then it is known (see e.g., \cite{Sh}, $\S$ 22.3) that  for any smooth function $u:\r^m\setminus\{0\}\to \r$ is a limit of a sequence $f_n=\sum_{i=1}^{t_n} a_{ni}(r) b_{ni}(\theta)$, where $b_{ni}(\theta)\in C^{\infty}(S^{m-1})$. Using the well-known fact that $C^{\infty}(S^{m-1})\subset L^2(S^{m-1})=\oplus_{i=0}^{\infty} \mathcal{H}^i$ and the bases $\{v_{k1}, v_{k2},\cdots v_{kn_k}\}$ for $\mathcal{H}^k$, we can expressed $f_n$ as
\begin{equation}
f_n=\sum_{k=0}^{\infty} \sum_{i=1}^{n_k}\,u^n_{ki}(r)v_{ki}(\theta).
\end{equation}
Since $u$ and $f_n$ are closed in $C^{\infty}$ topology for $n\ge N$, we conclude that $u$ is biharmonic function with $\Delta u >0$, then so is $f_n$ a biharmonic function with $\Delta f_n >0$ for $n\ge N$. 

Using (\ref{2HF}) with $\sigma=r$ and the fact that $\Delta^{S^{m-1}}\,(u_{ki}(r)v_{ki}(\theta))=-k(m+k-2) u_{ki}\,v_{ki}$, we have
\begin{align}
\Delta f_n=\sum_{k=0}^{\infty} \sum_{i=1}^{n_k}[(u^n_{ki})''+(m-1)r^{-1}(u^n_{ki})'-k(m+k-2)r^{-2}u^n_{ki}] \,v_{ki}.
\end{align}
It  follows that for any $n$, $f_n$ is a solution of (\ref{Z}) if and only if

\begin{align}\label{Z1}
(u^n_{ki})''+(m-1)r^{-1}(u^n_{ki})'-k(m+k-2)r^{-2}u^n_{ki}=0, \;\forall\; k>0,  i,\;and
\end{align}
\begin{align}\notag
& (u^n_{01})''+(m-1)r^{-1}(u^n_{01})'-k(m+k-2)r^{-2}u^n_{01}\\\label{Z2}=&\;\begin{cases}
C,\;\; m=2\\
a+b|x|^{-(m-2)},\;for\; a, b > 0,\;\;\;m>2.
\end{cases}
\end{align}
One can easily check that the general solution of Equation (\ref{Z1}) is $u^n_{ki}(r)= c_1r^k+c_2r^{-(m+k-2)}$ which means $u^n_{ki}(r)v_{ki}$ is a harmonic function for all $k>0, i$.
By solving Equation (\ref{Z2}) with condition $\Delta u^n_{01}>0$ we have the general solutions
\begin{align}\label{Z3}
v^n_{01}(x)=\begin{cases}
c_1+c_2 \ln |x|+a |x|^2,\;\hskip3cm for\; m=2,\\
c_1+c_2  |x|^{-2} +a |x|^2+b\ln |x|,\;\hskip1.5cm for\; m=4,\\
c_1+c_2|x|^{-(m-2)}+ a |x|^2+b |x|^{-(m-4)},\; for\; m\ne 2, 4,
\end{cases}
\end{align}
where for $m=2$, the constant $a>0$, for $m=3, 4$, the constants $a, b \ge 0, a^2+b^2\ne 0$, and for $m\ne 2, 3, 4$, the constants $a  \ge 0, \;b\le 0, a^2+b^2\ne 0$.\\

Summarizing the above we obtain that $f_n=\sum_{k=0}^{\infty} \sum_{i=1}^{n_k}\,u^n_{ki}(r)v^n_{ki}(\theta)$ is a biharmonic function on $\r^m\setminus\{0\}$ with $\Delta f_n >0$ if and only if $f_n=v^n_{01}$ given by  (\ref{Z3}), which does not depends on $n$. Since  $u (r, \theta)=\lim_{n\to \infty} f_n=v^n_{01}$ , we obtain the theorem. 

\end{proof}

\begin{remark}
(i) We can check that by opposite sign choices for the constants $a, b$ in Theorem \ref{MT3}, we obtain all biharmonic functions on the punctured Euclidean space with negative Laplacian, i.e., $\Delta u<0$.\\
(ii) Note also that there are many biharmonic functions  on $\r^m\setminus\{0\}$ whose Laplacian  do not have a fixed sign. For example, $u=r^2\ln r$ is biharmonic on $\r^2\setminus\{0\}$ with $\Delta u=4(1+\ln r)$ which can be any real number.
\end{remark}

\end{document}